\theoremstyle{definition}
\newtheorem{Def}{Definition}[section]
\newtheorem{Thm}[Def]{Theorem}
\newtheorem{Prop}[Def]{Proposition}
\newtheorem{Rem}[Def]{Remark}
\newtheorem{Ex}[Def]{Example}
\newtheorem{Lem}[Def]{Lemma}
\numberwithin{equation}{section}
\title{Theta operator for Hermitian modular forms over the Eisenstein field}
\author{Shoyu Nagaoka and Sho Takemori}
\date{}
\begin{document}

\maketitle

\begin{abstract}
In this paper, we have investigated the 
mod $p$ kernel of the theta operator for Hermitian modular forms
when the base field is the Eisenstein field.

\end{abstract}

\section{Introduction}
\label{intro}
The first attempt to generalize Ramanujan's theta operator to a
higher degree case was made in \cite{B-N}. 
Subsequently, this generalization was developed by several researchers
(e.g., \cite{K-K-N}, \cite{B-K-N},\cite{K-N}).
\\
 In the case of Siegel modular forms, the theta operator $\varTheta$
is defined by $F=\sum a_F(T)q^T$$\longrightarrow$
$\varTheta (F):=\sum\text{det}(T)\cdot a_F(T)q^T$ for the generalized $q$-expansion
$F=\sum a_F(T)q^T$.
If we use the theta operator, some congruence property that satisfies Igusa's Siegel cusp
form $\chi_{35}$ can be expressed as
$$
(*)\qquad\qquad\qquad\qquad \varTheta (\chi_{35}) \equiv 0 \pmod{23}\quad
(\text{cf. \cite{K-K-N}}).
$$
For a modular form $F$, the image $\varTheta (F)$ is not necessarily a modular form
in general. However, we know that, for a fixed prime number $p$, $\varTheta (F)$
is congruent to a {\it true} modular form $G$ mod $p$ 
under some condition on $p$ (cf. \cite{B-N}) as follows:
$$
\varTheta (F) \equiv G \pmod{p},
$$
The modular form $G$ occasionally becomes zero,
that is, $\varTheta (F) \equiv 0 \pmod{p}$. In this case, we say that $F$ is
an element of the mod $p$ kernel of the theta operator.\\
In the case of degree 2 Siegel modular forms, we have a number of examples of
such forms. For example, Igusa's cusp form $\chi _{35}$ is an element of the mod $23$
kernel of the theta operator as stated above. Moreover, the Siegel Eisenstein
series $E_{12}^{(2)}$ and the Siegel theta series 
$\vartheta_{\mathcal{L}}^{(2)}$ associated
with the Leech lattice $\mathcal{L}$ satisfy
$$
(\dagger)\qquad\qquad\qquad\qquad\qquad\varTheta (E_{12}^{(2)}) \equiv \varTheta (\vartheta_{\mathcal{L}}^{(2)})
\equiv 0 \pmod{23},\quad (\text{ e.g. cf. \cite{N-T1}}).
$$
In this paper, we shall show that similar phenomena exist in the case of
Hermitian modular forms of degree 2 over the Eisenstein field $\mathbb{Q}(\sqrt{3}\,i)$.\\
In \cite{D-K}, Dern and Krieg determined the structure of the graded rings of degree 2 
Hermitian modular forms in the cases $\mathbb{Q}(i)$ (the Gaussian field) and 
$\mathbb{Q}(\sqrt{3}\,i)$ (the Eisenstein field). In the case of $\mathbb{Q}(\sqrt{3}\,i)$,
they showed that there are two odd weight forms $\phi_9$ and $\phi_{45}$ in the set
of generators. In this study, we shall show that
$$
(**)\qquad\qquad \varTheta (\phi_9) \equiv 0 \pmod{2},\qquad
                       \varTheta(\phi_{45}) \equiv 0 \pmod{11},
$$
in $\S$ \ref{sec:4-2}.\\ 
The congruence relations $(*)$ and $(**)$ lead us to the following conjecture:
\vspace{1mm}
\\
\quad {\it Any odd weight modular form with Borcherds product will be in the mod $p$ kernel
of the theta operator for a suitable prime number $p$.}
\vspace{1mm}
\\
(Igusa's cusp form $\chi_{35}$ is a typical example of Borcherds product 
(cf. \cite{G-N}, Theorem 1.5.). Moreover, the modular forms $\phi_9$ and $\phi_{45}$
are constructed as Borcherds products (\cite{D-K}, Corollary 3).)
\vspace{2mm}
\\
The isometry classes of rank 12 Eisenstein lattices were classfied by 
Hentschel, Krieg and Nebe in \cite{H-K-N}. They showed that there are exactly five
isometry classes. According to their notation, we write the corresponding
representative Gram matrices as $H_i$$(i=1,\ldots,5)$. 
We show that the weight 12 Hermitian Eisenstein series
$E_{12,\mathbb{Q}(\sqrt{3}\,i)}^{(2)}$ and the
Hermitian theta series $\vartheta_{H_i}^{(2)}$ $(i=4,5)$ satisfy
$$
(\dagger\dagger)\qquad\qquad\qquad
\varTheta(E_{12,\mathbb{Q}(\sqrt{3}\,i)}^{(2)}) \equiv 
\varTheta (\vartheta_{H_4}^{(2)}) \equiv \varTheta (\vartheta_{H_5}^{(2)})
\equiv 0 \pmod{11},
$$
where $H_5$ corresponds to the Hermitian Leech lattice
(cf. $\S$ \ref{sec:3-3} ) .
\\
The congruence relation $(\dagger\dagger)$ and the corresponding
result in the case of the Gaussian field (Theorem 8 in \cite{K-N})
leads to the second conjecture:
\vspace{1mm}
\\
\quad {\it Any Hermitian theta series associated with the Hermitian Leech
lattice will be in the mod $p$ kernel of the theta operator for a suitable prime 
number $p$.}
\vspace{1mm}
\\
\section{Preliminaries}
\label{sec:2}
\subsection{Notation}
\label{sec:2-1}
The {\it Hermitian upper half-space} of degree 2 is given as
$$
\mathbb{H}_2:=\{ Z\in M_2(\mathbb{C})\;\mid\;
\frac{1}{2i}(Z-{}^t\overline{Z})>0\;\},
$$
where ${}^t\overline{Z}$ denote the transpose, and the complex conjugate
of $Z$. Let $\boldsymbol{K}$ be an imaginary quadratic number field with the
discriminant $d_{\boldsymbol{K}}$ and the ring of integers
$$
\mathcal{O}_{\boldsymbol{K}}=\mathbb{Z}+\mathbb{Z}\omega,\quad
\omega=
\begin{cases}
i\sqrt{|d_{\boldsymbol{K}}|}/2,  & \text{if $d_{\boldsymbol{K}} \equiv 0 \pmod{4}$},\\
(1+i\sqrt{|d_{\boldsymbol{K}}|})/2 & \text{if $d_{\boldsymbol{K}} \not\equiv 0 \pmod{4}$}.
\end{cases}
$$
Then
$$
\Gamma_2(\mathcal{O}_{\boldsymbol{K}}):=
\{\,M\in M_4(\mathcal{O}_{\boldsymbol{K}})\ \mid\;MJ{}^t\overline{M}=J\,\},\quad
J:=\begin{pmatrix}  0 & -E_2\\ E_2 & 0 \end{pmatrix},
$$
is called the {\it Hermitian modular group} of degree 2 over $\boldsymbol{K}$.
The group $\Gamma_2(\mathcal{O}_{\boldsymbol{K}})$ acts on $\mathbb{H}_2$
by the fractional linear transformation $Z\longmapsto M\langle Z\rangle$
$:=(AZ+B)(CZ+D)^{-1}$, $M=\binom{A\,B}{C\,D}$.
If $\Gamma\subset\Gamma_2(\mathcal{O}_{\boldsymbol{K}})$ is a subgroup of
finite index and $\nu$ is an abelian character of $\Gamma$, the space
$M_k(\Gamma,\nu)$,\,$k\in\mathbb{Z}$ of the {\it Hermitian modular forms} of
weight $k$ and character $\nu$ with respect to $\Gamma$ consists of
all the holomorphic functions $F:\,\mathbb{H}_2\longrightarrow \mathbb{C}$ that
satisfy
$$
F\mid_kM(Z):=
\text{det}(CZ+D)^{-k}F(M\langle Z\rangle)=\nu (M)\cdot F(Z)
$$
for all $M=\binom{AB}{CD}\in\Gamma$. The subspace $S_k(\Gamma,\nu)$
of the cusp forms is characterized by the condition
$$
F\Big{|}_k\begin{pmatrix} {}^t\overline{U} & 0 \\ 0 & U^{-1}\end{pmatrix}
\Big{|}\Phi \equiv 0\quad
\text{for all}\quad U\in GL_2(\boldsymbol{K}),
$$
where $\Phi$ is the Siegel $\Phi$-operator. There is an exceptional
automorphism of the Hermitian upper half-space
$$
I_{tr}\,:\,\mathbb{H}_2\longrightarrow \mathbb{H}_2,\quad
Z\longmapsto {}^tZ,
$$
satisfying
$$
M\circ I_{tr}=I_{tr}\circ \overline{M}\quad
\text{for all}\quad M\in \Gamma_2(\mathcal{O}_{\boldsymbol{K}}).
$$
The superscript {\it sym} (resp. {\it skew}) denotes the subspace
of the symmetric (resp. skew-symmetric) Hermitian modular forms characterized by
$$
F\circ I_{tr}=F\qquad \text{resp.}\qquad F\circ I_{tr}=-F.
$$
Each $F\in M_k(\Gamma_2(\mathcal{O}_{\boldsymbol{K}}),\text{det}^l)$ possesses
a Fourier expansion of the form
$$
F(Z)=\sum_{0\leq H\in\Lambda_2(\mathcal{O}_{\boldsymbol{K}})}
       a(F;H)\text{exp}(2\pi i\text{tr}(HZ)),
$$
where
$$
\Lambda_2(\mathcal{O}_{\boldsymbol{K}}):=\left\{ H=
                           \begin{pmatrix} m & t \\ \overline{t} & n\end{pmatrix}
                \;\Big{|}\; m,\,n\in\mathbb{Z}_{\geq 0},
                \,t\in\frac{1}{\sqrt{d_{\boldsymbol{K}}}}\mathcal{O}_{\boldsymbol{K}}
                =\mathcal{O}_{\boldsymbol{K}}^{\sharp}\;\right\}.
$$
For simplicity, we write the above Fourier expansion by $F=\sum a(F;H)q^H$.
We may regard that the right-hand side is an element of some formal
power series ring $\mathbb{C}[\![ q]\!]$ (cf. \cite{M-N}, p.248).
\\
Let $R$ be a subring of $\mathbb{C}$. We denote by
$M_k(\Gamma_2(\mathcal{O}_{\boldsymbol{K}}),\text{det}^l)_R$ the $R$-module
consisting of $F\in M_k(\Gamma_2(\mathcal{O}_{\boldsymbol{K}}),\text{det}^l)$
all of whose Fourier coefficients $a(F;H)$ are in $R$. In this case, we may regard
it as an element of $R[\![ q]\!]$.
\vspace{1mm}
\\
For a prime number $p$, we denote by $\mathbb{Z}_{(p)}$ the local ring at
$p$ (i.e. the ring of $p$-integral rational numbers). For two elements
$F_i=\sum a(F_i;H)q^H\in \mathbb{Z}_{(p)}[\![q]\!]$\,($i=1,2$), we write
$$
F_1 \equiv F_2 \pmod{p}
$$
when $a(F_1;H) \equiv a(F_2;H) \pmod{p}$ holds for all 
$H\in\Lambda_2(\mathcal{O}_{\boldsymbol{K}})$.
\subsection{Eisenstein series and theta series}
\label{sec:2-2}
Examples of the Hermitian modular forms are given by the Hermitian Eisenstein
series and the Hermitian theta series.\\
The {\it Hermitian Eisenstein series} of weight $k$ and degree 2 over $\boldsymbol{K}$
is defined by
$$
E_{k,\boldsymbol{K}}^{(2)}(Z)
:=\sum_{M=\binom{*\,*}{C\,D}:\left\{\binom{*\,*}{0\,*}\right\}\backslash \Gamma_2(\mathcal{O}_{\boldsymbol{K}})}
(\text{det}(M))^{k/2}\text{det}(CZ+D)^{-k},
$$ 
where $k>4$ is an even integer. It is known that
$E_{k,\boldsymbol{K}}^{(2)}\in$ 
$M_k(\Gamma_2(\mathcal{O}_{\boldsymbol{K}}),\text{det}^{-k/2})_{\mathbb{Q}}^{sym}$
(cf. \cite{D-K}). Moreover
$E_{4,\boldsymbol{K}}^{(2)}\in$ 
$M_4(\Gamma_2(\mathcal{O}_{\boldsymbol{K}}),\text{det}^{-2})_{\mathbb{Q}}^{sym}$
is defined as the Maa\ss\, lift (cf. \cite{D-K}).
An explicit formula for $a(E_{k,\boldsymbol{K}}^{(2)};H)$ was given by Krieg \cite{K}
in the case when the class number of $\boldsymbol{K}$ was one.
\\
The second example is the theta series. Let $S\in Her_m(\boldsymbol{K})$
be a positive definite even matrix with respect to $\mathcal{O}_{\boldsymbol{K}}$.
The degree 2 {\it Hermitian theta series} associated with $S$ is defined by
$$
\vartheta^{(2)}(Z;S):=\sum_{G\in M_{m,2}(\mathcal{O}_{\boldsymbol{K}})}
\text{exp}(\pi i\text{tr}(Z{}^t\overline{G}SG)).
$$
If $S$ satisfies an additional condition $\text{det}(S)=(2/\sqrt{d_{\boldsymbol{K}}})^m$
(i.e. $S$ is unimodular),
then $m \equiv 0 \pmod{4}$ and
$$
\vartheta^{(2)}(Z;S)\in 
M_m(SL_4(\boldsymbol{K})\cap\Gamma_2(\mathcal{O}_{\boldsymbol{K}}),1)_{\mathbb{Z}}^{sym},
$$
(cf. \cite{C-R}).
\section{Hermitian modular forms over the Eisenstein field}
\label{Eisenstein}
In the rest of this paper, we treat the case in which
$$
\boldsymbol{K}=\mathbb{Q}(\sqrt{3}\,i)\quad (\text{Eisenstein field}).
$$
In this case
$$
d_{\boldsymbol{K}}=-3,\qquad 
\mathcal{O}_{\boldsymbol{K}}=\mathbb{Z}+\omega\mathbb{Z},\quad
\omega=(1+\sqrt{3}\,i)/2.
$$
It is known that
\begin{equation}
\label{SL}
M_k(\Gamma_2(\mathcal{O}_{\boldsymbol{K}}),\text{det}^l)
=
\begin{cases}
M_k(SL_4(\boldsymbol{K})\cap\Gamma_2(\mathcal{O}_{\boldsymbol{K}}),1)
& \text{if $k \equiv l \pmod{3}$},\\
\{0\} & \text{otherwise}.
\end{cases}
\end{equation}

In this case, the formal power series $\mathbb{C}[\![ q]\!]$ introduced in $\S$ \ref{sec:2-1}
is explicitly given as follows:
\begin{equation}
\label{power}
\mathbb{C}[\![ q]\!]:=
\mathbb{C}[\dot{q}_{12}^{\pm 1},\ddot{q}_{12}^{\pm 1}][\![\dot{q}_{11},\dot{q}_{22}]\!],
\end{equation}
where $\dot{q}_{12}:=\text{exp}(2\pi i(z_{12}-z_{21})/2\sqrt{3}i)$,
$\ddot{q}_{12}:=\text{exp}(2\pi i(z_{12}+z_{21})/2)$,
$\dot{q}_{11}:=\text{exp}(2\pi iz_{11})$, and $\dot{q}_{22}:=\text{exp}(2\pi iz_{22})$
for $Z=\begin{pmatrix}z_{11}&z_{12}\\z_{21}&z_{22} \end{pmatrix}$.
\subsection{Structure of the graded ring}
\label{sec:3-1}
In the case $\boldsymbol{K}=\mathbb{Q}(\sqrt{3}\,i)$, Dern-Krieg \cite{D-K}
determined the structure of the graded ring
$$
\underset{k\in\mathbb{Z}}{\bigoplus}\,M_k(\Gamma_2(\mathcal{O}_{\boldsymbol{K}}),\text{det}^k)
=\underset{k\in\mathbb{Z}}{\bigoplus}\,M_k(SL_4(\boldsymbol{K})\cap
\Gamma_2(\mathcal{O}_{\boldsymbol{K}}),1)\quad (\text{cf. (\ref{SL})}).
$$
\begin{Prop}
\label{odd}
{\rm (\cite{D-K}, Corollary 3)} Let $\boldsymbol{K}=\mathbb{Q}(\sqrt{3}\,i)$.
Then there exist the Borcherds products
$$
\phi_9\in S_9(\Gamma_2(\mathcal{O}_{\boldsymbol{K}}),1)^{skew}\quad
\text{and}\quad
\phi_{45}\in S_{45}(\Gamma_2(\mathcal{O}_{\boldsymbol{K}}),1)^{sym}.
$$
\end{Prop}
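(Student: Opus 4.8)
This statement is part of the Dern--Krieg structure theorem, and the route I would take is theirs: the multiplicative (Borcherds) theta lift for the orthogonal group attached to $\boldsymbol{K}=\mathbb{Q}(\sqrt{3}\,i)$. \textbf{Step 1 (pass to an orthogonal group).} Use the exceptional isomorphism between $\mathrm{SU}(2,2)$-type groups and $\mathrm{O}(2,4)$ to realize $\Gamma_2(\mathcal{O}_{\boldsymbol{K}})$, up to centre and finite index, as $\mathrm{O}^+(L)$ for an even lattice $L$ of signature $(2,4)$ whose discriminant form is the one determined by $d_{\boldsymbol{K}}=-3$ (concretely, two hyperbolic planes together with a copy of the negative $A_2$ root lattice). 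Under this identification $\mathbb{H}_2$ is a connected component of the type IV domain of $L$, the space $M_k(\Gamma_2(\mathcal{O}_{\boldsymbol{K}}),1)$ becomes the space of orthogonal modular forms of weight $k$, and the transpose involution $I_{tr}$ corresponds to a reflection $\sigma\in\mathrm{O}^+(L)\setminus\mathrm{SO}^+(L)$ whose mirror $H_\sigma$, read back on $\mathbb{H}_2$, is the symmetric locus $\{Z={}^{t}Z\}$. The decomposition into $^{sym}$ and $^{skew}$ parts is the $\pm1$-eigenspace decomposition for $\sigma$.

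\textbf{Step 2 (the Borcherds lift).} Recall Borcherds' theorem: a weakly holomorphic vector-valued modular form $f=\sum_{\gamma}\sum_{n\gg-\infty}c(\gamma,n)\,q^{n}e_\gamma$ of weight $1-4/2=-1$ for the Weil representation $\rho_L$, holomorphic on the upper half-plane and with $c(\gamma,n)\in\mathbb{Z}$ for $n<0$, gives a meromorphic modular form $\Psi_f$ for $\mathrm{O}^+(L)$ of weight $c(0,0)/2$, whose divisor is the Heegner divisor $\sum_{\lambda}c(\lambda,-\lambda^2/2)\,\lambda^{\perp}$ and which has a product expansion $\Psi_f=q^{\rho}\prod_{\nu}(1-q^{\nu})^{c(\nu,\nu^{2}/2)}$ near a cusp, the Weyl vector $\rho$ lying in the interior of the positive cone. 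In particular $\Psi_f$ is holomorphic iff its divisor is effective, and it is automatically a cusp form, since $\rho$ interior forces every Fourier coefficient $a(\Psi_f;H)$ with $H$ not positive definite to vanish.

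\textbf{Step 3 (the two input forms).} For $\phi_9$ one needs $f_9$ with $c(0,0)=18$ and negative part chosen so that the divisor of $\Psi_{f_9}$ is exactly $H_\sigma$ with multiplicity one and nothing else: then $\Psi_{f_9}$ is a holomorphic cusp form of weight $9$ vanishing to first order on the $\sigma$-mirror, so $\Psi_{f_9}\circ I_{tr}=-\Psi_{f_9}$, and one sets $\phi_9:=\Psi_{f_9}\in S_9(\Gamma_2(\mathcal{O}_{\boldsymbol{K}}),1)^{skew}$. For $\phi_{45}$ one needs $f_{45}$ with $c(0,0)=90$ and an effective principal part whose associated divisor is $\sigma$-invariant and has even multiplicity along $H_\sigma$ (equivalently, is a sum of Heegner divisors either transverse to, or doubled along, the mirror): then $\Psi_{f_{45}}$ is a holomorphic cusp form of weight $45$ with $\Psi_{f_{45}}\circ I_{tr}=+\Psi_{f_{45}}$, and $\phi_{45}:=\Psi_{f_{45}}\in S_{45}(\Gamma_2(\mathcal{O}_{\boldsymbol{K}}),1)^{sym}$.

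\textbf{Step 4 (existence of the inputs, and the main difficulty).} The existence of a weakly holomorphic form of weight $-1$ for $\rho_L$ with a prescribed principal part is controlled by Borcherds' duality theorem: such a form exists iff the prescribed principal part is orthogonal, under the natural pairing, to every holomorphic cusp form of the dual weight $1+4/2=3$ for $\rho_L^{*}$. So the real content is (i) pinning down $L$ and its discriminant form precisely; (ii) computing the small, finite-dimensional space of weight-$3$ obstruction forms and checking that the principal parts demanded in Step 3 are admissible; and (iii) verifying from the explicit product that the weights come out exactly $9$ and $45$, that the divisors are effective with the stated $\sigma$-parity, and that the products are not constant. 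I expect (ii)--(iii) to be the main obstacle: it is the linear-algebra bookkeeping that forces $c(0,0)$ to land on $18$ and $90$ with an effective divisor of the required symmetry type, together with the nonvanishing check. This is precisely the computation carried out in \cite{D-K}, whose lattice and normalization I would adopt; alternatively one can sidestep the vector-valued formalism by writing $f_9$ and $f_{45}$ as explicit eta-quotient combinations on $\Gamma_0(3)$ twisted by the quadratic character, which makes both the constant term and the effectivity of the divisor transparent.
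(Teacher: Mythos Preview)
The paper does not give its own proof of this proposition: it is stated as a citation of \cite{D-K}, Corollary~3, and used as a black box thereafter. So there is no ``paper's proof'' to compare against beyond the reference itself.

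Your sketch is a faithful outline of the Dern--Krieg argument: the exceptional isomorphism $\mathrm{SU}(2,2)\cong\mathrm{O}(2,4)$, the identification of the lattice $L$ (two hyperbolic planes plus $A_2(-1)$), the Borcherds multiplicative lift from weakly holomorphic weight $-1$ forms for $\rho_L$, and the duality/obstruction criterion for existence of suitable inputs are exactly the ingredients of \cite{D-K}. Your identification of $I_{tr}$ with a reflection and the parity of the divisor along its mirror as the mechanism distinguishing $^{skew}$ from $^{sym}$ is also correct and is how Dern--Krieg obtain the symmetry types. One small caution: the sentence ``it is automatically a cusp form, since $\rho$ interior forces every Fourier coefficient $a(\Psi_f;H)$ with $H$ not positive definite to vanish'' is too quick in general---interiority of the Weyl vector controls vanishing at one cusp, and for orthogonal groups of higher rank one must in principle check all boundary components. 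In the present situation (full Hermitian modular group of degree $2$ over a class-number-one field) there is a single $0$-dimensional cusp up to equivalence, so the argument goes through, but it would be cleaner to say so explicitly or to invoke the Koecher principle plus the known vanishing of $M_k$ in these odd weights on the boundary. Apart from that, your proposal is the correct route and matches the cited source.
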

We will study the mod $p$ properties of $\phi_9$ and $\phi_{45}$ in $\S$ \ref{sec:4-2}.
\begin{Thm}
\label{graded}
{\rm (\cite{D-K}, Theorem 6)} 
Let $\boldsymbol{K}=\mathbb{Q}(\sqrt{3}\,i)$.\\
(1)\,The graded ring
$$
\underset{k\in\mathbb{Z}}{\bigoplus}\,M_k(\Gamma_2(\mathcal{O}_{\boldsymbol{K}}),\text{det}^k)
=\underset{k\in\mathbb{Z}}{\bigoplus}\,M_k(SL_4(\boldsymbol{K})\cap
\Gamma_2(\mathcal{O}_{\boldsymbol{K}}),1)
$$
is generated by
$$
E_{4,\boldsymbol{K}}^{(2)},\;\;
E_{6,\boldsymbol{K}}^{(2)},\;\;
\phi_9,\;\;
E_{10,\boldsymbol{K}}^{(2)},\;\;
E_{12,\boldsymbol{K}}^{(2)},\;\;\text{and}\;\;
\phi_{45},
$$
where $\phi_9$ and $\phi_{45}$ are cusp forms given in Proposition \ref{odd}.\\
(2)\, The ideal of cusp forms in $\oplus M_k(\Gamma_2(\mathcal{O}_{\boldsymbol{K}}),\text{det}^k)$ is generated by
$$
\phi_9,\;\;
f_{10},\;\;
f_{12},\;\;\text{and}\;\;
\phi_{45},
$$
where $f_k$ is defined by
\begin{align*}
& f_{10}:=E_{10,\boldsymbol{K}}^{(2)}-E_{4,\boldsymbol{K}}^{(2)}\cdot E_{6,\boldsymbol{K}}^{(2)}
           \in S_{10}(\Gamma_2(\mathcal{O}_{\boldsymbol{K}}),\text{det}^{-5})^{sym},\\
& f_{12}:=E_{12,\boldsymbol{K}}^{(2)}-\frac{441}{691}(E_{4,\boldsymbol{K}}^{(2)})^3
            -\frac{250}{691}(E_{6,\boldsymbol{K}}^{(2)})^2
           \in S_{12}(\Gamma_2(\mathcal{O}_{\boldsymbol{K}}),1)^{sym}.
\end{align*}
\end{Thm}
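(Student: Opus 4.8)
The plan is to prove both parts by the standard scheme for structure theorems on rings of modular forms. Write $M_* := \bigoplus_{k} M_k(SL_4(\boldsymbol{K}) \cap \Gamma_2(\mathcal{O}_{\boldsymbol{K}}),1)$ for the ring in question (equal to $\bigoplus_k M_k(\Gamma_2(\mathcal{O}_{\boldsymbol{K}}),\det^k)$ by (\ref{SL})), and let $S_* := \bigoplus_k S_k$ be its ideal of cusp forms. The three steps are: (i) compute the Hilbert--Poincar\'e series $\sum_k \dim_{\mathbb{C}} M_k\, t^k$ by a dimension formula; (ii) exhibit explicit forms of the asserted weights; (iii) show the subring they generate already realizes the series from (i), hence is all of $M_*$. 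For (i) one invokes the dimension formula for degree $2$ Hermitian modular forms over an imaginary quadratic field --- obtained from Riemann--Roch on a desingularization of the Baily--Borel compactification of $\mathbb{H}_2/\Gamma_2(\mathcal{O}_{\boldsymbol{K}})$ together with vanishing of $M_k$ in small weight, or else bootstrapped through the Siegel $\Phi$-operator from Jacobi forms and the Maass lift --- which for $\boldsymbol{K} = \mathbb{Q}(\sqrt{3}\,i)$ should yield
$$
\sum_{k\ge 0} \dim_{\mathbb{C}} M_k(SL_4(\boldsymbol{K}) \cap \Gamma_2(\mathcal{O}_{\boldsymbol{K}}),1)\, t^k
= \frac{1+t^{45}}{(1-t^4)(1-t^6)(1-t^9)(1-t^{10})(1-t^{12})} .
$$

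As generators, take $E_{6,\boldsymbol{K}}^{(2)}, E_{10,\boldsymbol{K}}^{(2)}, E_{12,\boldsymbol{K}}^{(2)}$, the Hermitian Eisenstein series of $\S$\ref{sec:2-2} (the defining series converges since the weight exceeds $4$), and $E_{4,\boldsymbol{K}}^{(2)}$ from the Maass lift; all four are symmetric and lie in $M_*$ by (\ref{SL}). The odd-weight generators $\phi_9 \in S_9^{skew}$ and $\phi_{45} \in S_{45}^{sym}$ are the Borcherds products of Proposition \ref{odd}: one selects a weakly holomorphic vector-valued input form for the Weil representation of the relevant even lattice of signature $(2,4)$ whose principal part is arranged so that the Borcherds lift has weight $9$ (resp.\ $45$), trivial character, and divisor supported on rational quadratic divisors; holomorphy and cuspidality then follow from the shape of the divisor and the behaviour under $\Phi$, while the reflection symmetry of the input determines the \emph{skew} (resp.\ \emph{sym}) type. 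I expect this construction of $\phi_9$ and $\phi_{45}$ --- pinning down the correct input forms and verifying weight, character and cuspidality --- to be the main obstacle, since it is the one genuinely non-formal step; establishing the dimension formula in (i) is the other technically heavy point.

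Next I would show that $E_{4,\boldsymbol{K}}^{(2)}, E_{6,\boldsymbol{K}}^{(2)}, \phi_9, E_{10,\boldsymbol{K}}^{(2)}, E_{12,\boldsymbol{K}}^{(2)}$ are algebraically independent over $\mathbb{C}$. Since the affine cone over $\mathbb{H}_2/\Gamma_2(\mathcal{O}_{\boldsymbol{K}})$ has dimension $5$ (four for $Z$, one for the weight), it is enough to exhibit a point of $\mathbb{H}_2$ at which a Jacobian-type determinant built from these five forms and their first derivatives is nonzero, or to restrict to a sub-locus (e.g.\ the diagonal, reducing to elliptic or Siegel forms) and read off independence from leading Fourier coefficients. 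Hence $A := \mathbb{C}[E_{4,\boldsymbol{K}}^{(2)}, E_{6,\boldsymbol{K}}^{(2)}, \phi_9, E_{10,\boldsymbol{K}}^{(2)}, E_{12,\boldsymbol{K}}^{(2)}]$ is a polynomial ring and a homogeneous system of parameters for $M_*$, so $M_*$ is a finitely generated $A$-module; granting (or checking directly) that $M_*$ is Cohen--Macaulay, it is a free $A$-module, and dividing the series of (i) by that of $A$ shows the free basis sits in degrees $0$ and $45$. A Fourier-coefficient computation in weight $45$ (using $\dim S_{45}^{sym}$) shows $\phi_{45} \notin A_{45}$, so $\{1,\phi_{45}\}$ is a free basis: $M_* = A \oplus A\,\phi_{45}$. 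Then $\phi_{45}^2 = P + Q\,\phi_{45}$ with $P \in A_{90}$, $Q \in A_{45}$; comparing behaviour under the exceptional involution $Z \mapsto {}^tZ$ (all of $A_{90}$ is symmetric while $A_{45}$, hence $A_{45}\,\phi_{45}$, is skew, since $\phi_9$ occurs to an odd power in weight $45$ and an even power in weight $90$) forces $Q = 0$. This gives the presentation $M_* = \mathbb{C}[E_{4,\boldsymbol{K}}^{(2)}, E_{6,\boldsymbol{K}}^{(2)}, \phi_9, E_{10,\boldsymbol{K}}^{(2)}, E_{12,\boldsymbol{K}}^{(2)}, \phi_{45}]/(\phi_{45}^2 - P)$, which is part (1).

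For part (2), the $\Phi$-operator realizes $M_*/S_*$ as (a subring of) the ring of modular forms on the one-dimensional boundary of the Satake compactification, and $S_* = \ker\Phi$. One computes $\Phi(E_{4,\boldsymbol{K}}^{(2)}), \Phi(E_{6,\boldsymbol{K}}^{(2)}), \Phi(E_{10,\boldsymbol{K}}^{(2)}), \Phi(E_{12,\boldsymbol{K}}^{(2)})$ --- each an elliptic Eisenstein series for $SL_2(\mathbb{Z})$ --- and uses $\dim M_{10}(SL_2(\mathbb{Z})) = 1$ together with the classical expansion of $E_{12}$ to obtain $\Phi(E_{10,\boldsymbol{K}}^{(2)}) = \Phi(E_{4,\boldsymbol{K}}^{(2)})\,\Phi(E_{6,\boldsymbol{K}}^{(2)})$ and $\Phi(E_{12,\boldsymbol{K}}^{(2)}) = \tfrac{441}{691}\Phi(E_{4,\boldsymbol{K}}^{(2)})^3 + \tfrac{250}{691}\Phi(E_{6,\boldsymbol{K}}^{(2)})^2$; this is precisely why $f_{10}$ and $f_{12}$ as defined are cusp forms, while $\Phi(\phi_9) = \Phi(\phi_{45}) = 0$ because they are cuspidal. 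Thus $\Phi$ maps the generators $E_{4,\boldsymbol{K}}^{(2)}, E_{6,\boldsymbol{K}}^{(2)}$ onto free generators of the boundary ring and kills $\phi_9, \phi_{45}, f_{10}, f_{12}$; feeding this into the presentation from part (1) and computing the kernel of the resulting ring homomorphism (a routine elimination in the polynomial ring modulo $\phi_{45}^2 - P$) shows that $S_*$ is generated as an ideal of $M_*$ exactly by $\phi_9, f_{10}, f_{12}, \phi_{45}$.
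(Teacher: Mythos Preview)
The paper does not prove this theorem at all: it is quoted verbatim from Dern--Krieg \cite{D-K}, Theorem~6, with no argument supplied here. So there is no ``paper's own proof'' to compare your proposal against; the authors simply import the structure theorem as a known result and use it as input for their mod~$p$ computations.

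That said, your outline is broadly the standard shape for such structure theorems, but it is not quite the route Dern--Krieg take. Their method is a reduction/divisibility argument driven by the Borcherds products themselves: one shows that a form vanishing on the Heegner divisor cut out by $\phi_9$ (resp.\ $\phi_{45}$) is divisible by it in the ring of holomorphic forms, and then inducts on the weight by successively peeling off these factors and restricting to sub-loci (the Siegel locus $\mathbb{S}_2$, etc.), rather than first computing the full Hilbert series from a dimension formula and then matching it against a Cohen--Macaulay free resolution. The divisibility lemmas (cf.\ Lemma~2 of \cite{D-K}, invoked later in this very paper in the proof of Lemma~\ref{Lem43}) are the engine. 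Your approach would also work in principle, but the two ``heavy'' steps you flag --- an independent dimension formula for $M_k$ over $\mathbb{Q}(\sqrt{3}\,i)$ and the explicit Borcherds input data --- are exactly what Dern--Krieg's inductive scheme is designed to avoid or absorb: they get the dimensions \emph{out} of the structure theorem rather than feeding them in.
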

\subsection{Theta operator on Hermitian modular forms}
\label{sec:3-2}
The theory of a theta operator on the Hermitian modular forms was developed
by several researchers (e.g., \cite{M-S}, \cite{K-N}).\\
We recall that the Fourier expansion of the Hermitian modular form can be regarded
as an element of the formal power series ring $\mathbb{C}[\![ q]\!]$ (cf. (\ref{power})).
The {\it theta operator} over $\mathbb{C}[\![ q]\!]$ is defined by
$$
\varTheta :\,F=\sum a(F;H)q^H\longmapsto 
\varTheta (F):=\sum \text{det}(H)\cdot a(F;H)q^H.
$$
As stated in the introduction, $\varTheta (F)$ is not necessarily a Hermitian modular
form even if $F$ is. However we have the following result.
\begin{Thm}
\label{B-N-Hvarsion}
Let $\boldsymbol{K}=\mathbb{Q}(\sqrt{3}\,i)$ and $p$ be a prime number such
that $p\geq 5$. For any 
$F\in M_k(\Gamma_2(\mathcal{O}_{\boldsymbol{K}}),\text{det}^k)_{\mathbb{Z}_{(p)}}$,
there is a cusp form
$$
G\in S_{k+p+1}
(\Gamma_2(\mathcal{O}_{\boldsymbol{K}}),\text{det}^{k+p+1})_{\mathbb{Z}_{(p)}}
$$
such that
\begin{equation}
\label{equivalence}
\varTheta (F) \equiv G \pmod{p}.
\end{equation}
\end{Thm}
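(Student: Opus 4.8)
\medskip
\noindent\emph{Proof sketch.}
The plan is to adapt to the Hermitian situation over $\boldsymbol{K}=\mathbb{Q}(\sqrt{3}\,i)$ the argument of B\"ocherer--Nagaoka \cite{B-N} for Siegel modular forms. First I would reinterpret $\varTheta$ analytically: writing $Z=(z_{jk})\in\mathbb{H}_2$, $Y=\tfrac{1}{2i}(Z-{}^t\overline{Z})$, and $\partial_Z=(\partial/\partial z_{jk})_{1\le j,k\le2}$, the holomorphic second-order operator $\det(\partial_Z)$ satisfies $\det(\partial_Z)\,q^{H}=(2\pi i)^{2}\det(H)\,q^{H}$ for every $H\in\Lambda_2(\mathcal{O}_{\boldsymbol{K}})$, since $q^{H}=\exp(2\pi i\operatorname{tr}(HZ))$. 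Hence $\varTheta=(2\pi i)^{-2}\det(\partial_Z)$ on $q$-expansions, and it is enough to show that $(2\pi i)^{-2}\det(\partial_Z)F$ agrees modulo $p$ with the $q$-expansion of a holomorphic cusp form of weight $k+p+1$; the character $\det^{k+p+1}$ is then automatic from $(\ref{SL})$, which collapses all these spaces onto the corresponding spaces for $SL_4(\boldsymbol{K})\cap\Gamma_2(\mathcal{O}_{\boldsymbol{K}})$. (A tempting shortcut --- reduce to the six generators of Theorem~\ref{graded} and propagate by a Leibniz rule --- founders on the fact that $\varTheta$ is a second-order operator, not a derivation, so the resulting non-modular cross-terms would also have to be controlled.)

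\smallskip
The second step constructs a Maa\ss--Shimura-type correction of $\det(\partial_Z)$. Inside the ring of \emph{nearly holomorphic} Hermitian modular forms --- polynomials in the entries of $Y^{-1}$ with holomorphic coefficients, obeying the weight-$k$ transformation law --- there is a corrected operator
\[
\widehat{\partial}=\det(\partial_Z)+\bigl(\text{terms of differential order}\le 1\ \text{with coefficients polynomial in the entries of }2\pi i\,Y^{-1}\bigr)
\]
which carries $M_k(\Gamma_2(\mathcal{O}_{\boldsymbol{K}}),\det^k)$ into the nearly holomorphic Hermitian modular forms of weight $k+2$ --- the degree-$2$ Hermitian counterpart of the classical fact that $\tfrac{1}{2\pi i}f'-\tfrac{k}{12}E_2f$ is modular of weight $k+2$. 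I would produce $\widehat\partial$ either from the general theory of such operators on $\mathbb{H}_2$ (\cite{M-S}) or, working explicitly in the coordinates of $(\ref{power})$, by writing it out and checking equivariance from the transformation law $Y\mapsto{}^t\overline{(CZ+D)}^{-1}\,Y\,(CZ+D)^{-1}$, recording that every coefficient that appears is $p$-integral.

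\smallskip
The third step removes the non-holomorphic part modulo $p$. The difference $\widehat\partial F-\det(\partial_Z)F$ is a polynomial in the entries of $2\pi i\,Y^{-1}$ with holomorphic-modular coefficients, and modulo $p$ these non-holomorphic generators can be replaced by genuine holomorphic modular forms by means of two arithmetic inputs: a Hermitian Eisenstein-type form of weight $p-1$ with $E_{p-1,\boldsymbol{K}}^{(2)}\equiv1\pmod p$ (a Hasse-invariant analogue; for $p=5$, where the series defining a weight-$4$ Eisenstein series does not converge, one uses instead the Maa\ss-lift form $E_{4,\boldsymbol{K}}^{(2)}$), and the Hermitian analogue of $E_{p+1}\equiv E_2\pmod p$, which identifies the weight-$2$ quasimodular object carrying the $Y^{-1}$-dependence with a $p$-integral holomorphic modular form of weight $p+1$. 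For the class-number-one field $\boldsymbol{K}=\mathbb{Q}(\sqrt{3}\,i)$ both can be attacked through Krieg's explicit formula \cite{K} for the Fourier coefficients of $E_{k,\boldsymbol{K}}^{(2)}$ together with Kummer/von Staudt--Clausen congruences. After these substitutions every $Y^{-1}$ disappears modulo $p$, so $\varTheta(F)$ --- the holomorphic part of $(2\pi i)^{-2}\widehat\partial F$ --- is congruent mod $p$ to a holomorphic modular form of weight $k+2$, and hence, after multiplying by $E_{p-1,\boldsymbol{K}}^{(2)}$, to a holomorphic modular form $G$ of weight $k+p+1$. Finally $G$ is a cusp form: since $a(\varTheta F;H)=\det(H)\,a(F;H)$ vanishes for every $H$ of rank $\le1$ --- exactly the Fourier coefficients picked out by the Siegel $\Phi$-operator --- the non-cuspidal (Eisenstein) component of $G$ has all its Fourier coefficients $\equiv0\pmod p$ and, by the $p$-integrality and mod-$p$ linear independence of the relevant Eisenstein series, may be subtracted off, leaving a $p$-integral cusp form $G\in S_{k+p+1}(\Gamma_2(\mathcal{O}_{\boldsymbol{K}}),\det^{k+p+1})_{\mathbb{Z}_{(p)}}$ with $\varTheta(F)\equiv G\pmod p$.

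\smallskip
The main obstacle lies in Steps 2 and 3 together: constructing $\widehat\partial$ over the Eisenstein field with provably $p$-integral coefficients, and establishing the two congruences $E_{p-1,\boldsymbol{K}}^{(2)}\equiv1$ and ``weight $p+1$ Eisenstein $\equiv$ weight $2$ quasimodular'' modulo $p$, together with the $p$-integrality and mod-$p$ linear independence of the Hermitian Eisenstein series involved. The hypothesis $p\ge5$ is exactly what is needed to invert the primes $2$ and $3$ --- the primes dividing $\abs{d_{\boldsymbol{K}}}=3$ and the index of $\mathbb{Z}[\sqrt{3}\,i]$ in $\mathcal{O}_{\boldsymbol{K}}=\mathbb{Z}+\mathbb{Z}\tfrac{1+\sqrt{3}\,i}{2}$ --- which occur in the denominators $3$ of the $\det(H)$, in the entries of $Y^{-1}$, and in Bernoulli denominators; one should also check that the Eisenstein-series congruences do not sharpen the bound to $p\ge7$.
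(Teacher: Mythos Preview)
Your route is not the paper's. The paper gives essentially a one-line proof: it cites \cite{K-N}, Theorem~3 (the Gaussian-field case) and asserts that the same Rankin--Cohen bracket method goes through over $\mathbb{Q}(\sqrt{3}\,i)$, pointing to \cite{M-S} for the Hermitian bracket. Concretely, that method forms the Hermitian Rankin--Cohen bracket $[F,E_{p-1,\boldsymbol{K}}^{(2)}]$, which is \emph{already} a holomorphic cusp form of weight $k+(p-1)+2=k+p+1$ with $p$-integral Fourier coefficients; since $E_{p-1,\boldsymbol{K}}^{(2)}\equiv 1\pmod p$, every summand in the bracket that differentiates the Eisenstein factor dies mod~$p$, and what survives is a $p$-unit multiple of $\varTheta(F)\cdot E_{p-1,\boldsymbol{K}}^{(2)}\equiv\varTheta(F)$. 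The paper in fact runs exactly this computation for $[\phi_9,E_{4,\boldsymbol{K}}^{(2)}]$ at the prime $2$ in the proof of Theorem~\ref{mainodd}(1), so you can see the mechanism there.

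Your alternative --- a Maa\ss--Shimura corrected operator $\widehat\partial$ into nearly-holomorphic forms, followed by a mod-$p$ holomorphic projection --- is a coherent strategy, but it is strictly harder in this setting. Beyond $E_{p-1,\boldsymbol{K}}^{(2)}\equiv 1$ you also need a Hermitian degree-$2$ analogue of ``$E_{p+1}\equiv E_2\pmod p$'' to eliminate the $Y^{-1}$-dependence; in degree~$2$ the correction terms in $\widehat\partial$ involve the \emph{matrix} $Y^{-1}$ (both traced against $\partial_Z$ and through $\det Y^{-1}$), so the quasimodular object to be replaced is not scalar, and the needed congruence with a holomorphic weight-$(p+1)$ object is not available off the shelf. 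You correctly flag this as the main obstacle, but the point is that the Rankin--Cohen approach of \cite{B-N} and \cite{K-N} was introduced precisely to sidestep it: the bracket lands in holomorphic forms from the outset and only the single congruence $E_{p-1}\equiv 1$ is required. So your sketch is not wrong in spirit, but it re-imports a difficulty that the intended argument --- which is in fact what \cite{B-N} does --- was designed to avoid.
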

\begin{proof}
The same type of statement in the case $\boldsymbol{K}=\mathbb{Q}(i)$ was
given in \cite{K-N}, Theorem 3. A similar method using the Rankin-Cohen bracket is
applicable for the case $\boldsymbol{K}=\mathbb{Q}(\sqrt{3}\,i)$ (e.g., \cite{M-S}).
\end{proof}
\begin{Ex}
\label{f12}
Here we give an example of (\ref{equivalence}) in the case that 
$F=E_{4,\boldsymbol{K}}^{(2)}$ and $p=7$. \\
Let $f_{12}$ be the cusp form introduced in Theorem \ref{graded}, (2).
We normalize $f_{12}$ as
$$
\tilde{f}_{12}:=-\frac{691\cdot 1847}{2^{13}\cdot 3^6\cdot 5^3\cdot 7^2}\,f_{12},
\qquad
a\left( \tilde{f}_{12},\begin{pmatrix}1 & 1/\sqrt{3}\,i\\ -1/\sqrt{3}\,i & 1     \end{pmatrix}\right)=1.
$$
Then all of the Fourier coefficients of $\tilde{f}_{12}$ are rational integers and
$$
\varTheta (E_{4,\boldsymbol{K}}^{(2)}) \equiv \tilde{f}_{12} \pmod{7}.
$$
For example
$$
\begin{cases}
& a\left(\varTheta(E_{4,\boldsymbol{K}}^{(2)});\begin{pmatrix}1&x\\ \overline{x}&1\end{pmatrix} \right)=4320,\\
& a\left(\tilde{f}_{12};\begin{pmatrix}1&x\\\overline{x}&1\end{pmatrix}\right)  =1,
\end{cases}
$$
for $x\in\boldsymbol{K}$ with $N(x)=1/3$, and
$$
\begin{cases}
& a\left(\varTheta(E_{4,\boldsymbol{K}}^{(2)});\begin{pmatrix}1&0\\ 0&1\end{pmatrix} \right)=17280,\\
& a\left(\tilde{f}_{12};\begin{pmatrix}1&0\\ 0&1\end{pmatrix}\right)=18.
\end{cases}
$$
\end{Ex}
In the congruence $\varTheta (F) \equiv G \pmod{p}$, the modular form $G$
sometimes vanishes identically, that is, $\varTheta (F) \equiv 0 \pmod{p}$.
In this case, $F$ is called an element of the {\it mod p kernel} of the theta
operator.

The main purpose of this paper is to construct such modular forms.

\subsection{Hermitian theta series for Eisenstein lattices}
\label{sec:3-3}
We still assume that $\boldsymbol{K}=\mathbb{Q}(\sqrt{3}\,i)$.
We recall the definition of Eisenstein lattice.\\
The lattice $\Lambda\subset \boldsymbol{K}^r$ is called an {\it Eisenstein lattice}
of rank $r$ if there exist linearly independent vectors
$b_1,\ldots,b_r\in\boldsymbol{K}^r$ such that
\vspace{2mm}
\\
(i)\; $\Lambda=\mathcal{O}_{\boldsymbol{K}}b_1+\cdots+\mathcal{O}_{\boldsymbol{K}}b_r$,
\vspace{1mm}
\\
(ii)\; $\text{det}(\langle b_j,b_k\rangle)=(2/\sqrt{3}\,i)^r$,
\vspace{1mm}
\\
(iii)\; $\langle \lambda,\lambda\rangle\in 2\mathbb{Z}$ for all $\lambda\in\Lambda$,
\vspace{2mm}
\\
where $\langle\,,\,\rangle:\boldsymbol{K}^r\times\boldsymbol{K}^r\longrightarrow \boldsymbol{K}$
is the standard Hermitian scalar product defined by $\langle x,y\rangle={}^t\overline{x}y$.
\\
It is known that Eisenstein lattices exist only if $r$ is multiple of $4$ (cf. $\S$ \ref{sec:2-2}).
Moreover, we have
$$
\vartheta^{(2)}(Z,S)
\in M_r(SL_4(\boldsymbol{K})\cap\Gamma_2(\mathcal{O}_{\boldsymbol{K}}),1)_{\mathbb{Z}}^{sym}
    =M_r(\Gamma_2(\mathcal{O}_{\boldsymbol{K}}),\text{det}^{-r/2})_{\mathbb{Z}}^{sym},
$$
where $S$ is the Gram matrix of an Eisenstein lattice (cf. \cite{H-K-N}, Proposition 2).
\\
Hentschel-Krieg-Nebe classified isometry classes of Eisenstein lattices of rank 12.
\begin{Thm}
\label{hermLeech}
{\rm (\cite{H-K-N}, Theorem 2).}
There are exactly five isometry classes of Eisenstein
lattices rank 12 whose root lattices are $3E_8$, $4E_6$, $6D_4$, $12A_{12}$,
$\emptyset$ (i.e., the Hermitian Leech lattice).
\end{Thm}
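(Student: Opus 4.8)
The statement is a classification result, and the plan is to translate it into the classification of even unimodular positive definite $\mathbb{Z}$-lattices of rank $24$ carrying a fixed-point-free automorphism of order $3$, to use Niemeier's classification to produce a list of five, and then to prove completeness by a mass argument. Concretely, I would first set up a dictionary between Eisenstein lattices of rank $r$ and pairs $(N,\tau)$, where $N$ is an even unimodular positive definite $\mathbb{Z}$-lattice of rank $2r$ and $\tau\in\mathrm{Aut}(N)$ has order $3$ with no nonzero fixed vector. Given an Eisenstein lattice $(\Lambda,\langle\cdot,\cdot\rangle)$, the associated (suitably normalized) trace form $\mathrm{tr}_{\boldsymbol{K}/\mathbb{Q}}\langle\cdot,\cdot\rangle$ turns the underlying group into a $\mathbb{Z}$-lattice of rank $2r$; condition (iii) makes it even and condition (ii) makes it unimodular, while multiplication by $\zeta_{3}=\omega-1\in\mathcal{O}_{\boldsymbol{K}}$ is an isometry $\tau$ of order $3$ with no nonzero fixed vector and characteristic polynomial $(x^{2}+x+1)^{r}$. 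Conversely, for such an $(N,\tau)$ one has $\tau^{2}+\tau+1=0$ because $\tau-\mathrm{id}$ is injective, hence $\mathbb{Z}[\tau]\cong\mathbb{Z}[\zeta_{3}]=\mathcal{O}_{\boldsymbol{K}}$; since $\mathcal{O}_{\boldsymbol{K}}$ is a principal ideal domain and $N$ is torsion-free, $N$ is $\mathcal{O}_{\boldsymbol{K}}$-free of rank $r$, and the unique sesquilinear form restricting to the given bilinear form is an Eisenstein structure. Two Eisenstein lattices are isometric exactly when $N\cong N'$ and $\tau'$ is $\mathrm{Aut}(N)$-conjugate to $\tau$ or to $\tau^{-1}$ (the freedom $\omega\leftrightarrow\overline{\omega}$ being precisely $\tau\leftrightarrow\tau^{-1}$). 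For $r=12$ the lattice $N$ has rank $24$, hence is one of the $24$ Niemeier lattices.

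Next I would run through the $24$ Niemeier lattices and decide which admit such a $\tau$. Since $\tau$ permutes the irreducible components of the root system of $N$ and $\tau^{3}=\mathrm{id}$, any orbit of components of size $3$ contributes a factor $(x^{3}-1)^{\dim}$ to the characteristic polynomial, hence an eigenvalue $1$; therefore $\tau$ fixes every component and restricts on each to a fixed-point-free order-$3$ isometry. Among the irreducible root lattices the only ones admitting such an isometry are $A_{2}$, $D_{4}$, $E_{6}$ and $E_{8}$ (for $A_{2}$ the $120^{\circ}$ rotation, for $D_{4}$ the fourth power of a Coxeter element of $W(F_{4})\cong\mathrm{Aut}(D_{4})$, and for $E_{6}$, $E_{8}$ appropriate order-$3$ elements of the Weyl group), and these have Coxeter numbers $3$, $6$, $12$, $30$. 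Since all components of a Niemeier root system share a common Coxeter number, the root system of $N$ must be $A_{2}^{12}$, $D_{4}^{6}$, $E_{6}^{4}$, $E_{8}^{3}$, or empty, in which case $N$ is the Leech lattice, which does admit the classical $\mathbb{Z}[\zeta_{3}]$-multiplication of the complex Leech lattice. In the glued cases ($A_{2}^{12}$, $D_{4}^{6}$, $E_{6}^{4}$) one checks that the componentwise $\tau$ acts trivially enough on the relevant discriminant groups to preserve the glue code, so it extends to $\mathrm{Aut}(N)$; for $E_{8}^{3}$ there is no glue. This yields the five root lattices named in the statement, and since the root system is an isometry invariant they are pairwise non-isometric.

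It remains to show that each of these five Niemeier lattices produces exactly one Eisenstein lattice up to isometry, equivalently that its fixed-point-free order-$3$ automorphisms form a single orbit under conjugation and inversion. I would establish this via the Hermitian mass formula: the even unimodular Hermitian $\mathcal{O}_{\boldsymbol{K}}$-lattices of rank $12$ constitute a single genus, Siegel's mass formula for the relevant unitary group gives an explicit value of $\sum_{i}|\mathrm{Aut}_{\mathcal{O}_{\boldsymbol{K}}}(\Lambda_{i})|^{-1}$, and one then exhibits the five candidates explicitly (three copies of the rank-$4$ Eisenstein $E_{8}$ in orthogonal sum, the three glued lattices above, and the complex Leech lattice), computes their five unitary automorphism-group orders, and verifies that the corresponding reciprocals already sum to the total mass; this forces the list to be complete. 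In practice the enumeration can be organised by the Kneser neighbour method for Hermitian lattices at a small prime of $\mathcal{O}_{\boldsymbol{K}}$ starting from the lattice $3E_{8}$, which sweeps out the whole (single) genus.

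The main obstacle is this last step. Evaluating the mass exactly requires careful control of the local densities of the genus, in particular at the ramified prime dividing $3$ and at the inert prime $2$, where evenness is a genuine local constraint; and the parallel computation of the five automorphism-group orders is delicate, above all $|\mathrm{Aut}_{\mathcal{O}_{\boldsymbol{K}}}(\text{complex Leech})|=|6\cdot\mathrm{Suz}|$, the centralizer in $\mathrm{Co}_{0}$ of an order-$3$ element of trace $-12$. Equivalently, on the Niemeier side the obstruction is the uniqueness-of-complex-structure statement for each of the five lattices, which again reduces to a conjugacy computation in its automorphism group. Everything else — the dictionary, the reduction to the five root systems, and the existence of the five lattices — is routine given Niemeier's classification and standard data on automorphisms of root lattices.
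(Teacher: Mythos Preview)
The paper does not supply its own proof of this statement: it is quoted verbatim as \cite{H-K-N}, Theorem~2, and used as a black box. There is therefore nothing in the present paper to compare your proposal against.

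That said, your outline is the standard one and is essentially what Hentschel--Krieg--Nebe do: pass via the trace form to even unimodular $\mathbb{Z}$-lattices of rank $24$ equipped with a fixed-point-free automorphism of order $3$, invoke Niemeier's list, cut it down to the five root systems $E_{8}^{3}$, $E_{6}^{4}$, $D_{4}^{6}$, $A_{2}^{12}$, $\emptyset$ by the argument on irreducible components, exhibit the five Eisenstein structures explicitly, and then certify completeness by matching the Hermitian mass. (Your side remark that the paper's ``$12A_{12}$'' should read ``$12A_{2}$'' is correct; this is a typo in the statement.) Your identification of the genuine work --- the exact mass on one side and the five unitary automorphism-group orders on the other, in particular $\lvert 6\cdot\mathrm{Suz}\rvert$ for the complex Leech lattice --- is accurate; in \cite{H-K-N} these are handled by explicit computation and the Kneser neighbour method, exactly as you sketch. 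So your plan is sound, but for the purposes of the present paper no proof is expected: a citation suffices.
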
 
According to \cite{H-K-N}, we write the corresponding Gram matrices as
$H_1,\ldots,H_5$ and consider the theta series
$\vartheta^{(2)}(Z,H_i)$$\in M_{12}(\Gamma_2(\mathcal{O}_{\boldsymbol{K}}),1)_{\mathbb{Z}}^{sym}$.
\\
The following identity is a special case of the analytic version of Siegel's main theorem:
\begin{Prop}
\label{siegel}
\begin{align*}
E_{12,\boldsymbol{K}}^{(2)}=&-\frac{3\cdot 7\cdot 11\cdot 13}{691\cdot 809\cdot 1847}
                                   \vartheta^{(2)}(Z,H_1)+
                                   \frac{2^6\cdot 5^3\cdot 7\cdot 11\cdot 13}
                                   {691\cdot 809\cdot 1847}
                                   \vartheta^{(2)}(Z,H_2)\\
                                   &+\frac{2\cdot 3^8\cdot 5^2\cdot 7\cdot 11\cdot 13}
                                   {691\cdot 809\cdot1847}
                                   \vartheta^{(2)}(Z,H_3)+
                                   \frac{2^{15}\cdot 3^2\cdot 5^2\cdot 7\cdot 13}
                                   {691\cdot 809\cdot 1847}
                                   \vartheta^{(2)}(Z,H_4)\\
                                   &+\frac{2^8\cdot 3^9\cdot 5}
                                   {691\cdot 809 \cdot 1847}
                                   \vartheta^{(2)}(Z,H_5).                                
\end{align*}
\end{Prop}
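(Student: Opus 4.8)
The plan is to use that the asserted equality lives in a very small space, so that, once the constants are known, it reduces to comparing finitely many Fourier coefficients; the constants themselves are supplied by the mass data behind the analytic version of Siegel's main theorem. First I would place both sides in $M_{12}(\Gamma_2(\mathcal{O}_{\boldsymbol{K}}),\det^{-6})^{sym}$, which by $(\ref{SL})$ coincides with $M_{12}(\Gamma_2(\mathcal{O}_{\boldsymbol{K}}),1)^{sym}$ since $12\equiv-6\equiv 0\pmod 3$: the left-hand side lands there by \S\ref{sec:2-2}, and each $\vartheta^{(2)}(Z,H_i)$ by \S\ref{sec:3-3}. By Theorem \ref{graded} every element of $\bigoplus_k M_k(\Gamma_2(\mathcal{O}_{\boldsymbol{K}}),\det^k)$ is a polynomial in $E_{4,\boldsymbol{K}}^{(2)}$, $E_{6,\boldsymbol{K}}^{(2)}$, $\phi_9$, $E_{10,\boldsymbol{K}}^{(2)}$, $E_{12,\boldsymbol{K}}^{(2)}$, $\phi_{45}$, and a weight count shows the weight-$12$ symmetric part is spanned by $(E_{4,\boldsymbol{K}}^{(2)})^3$, $(E_{6,\boldsymbol{K}}^{(2)})^2$ and $E_{12,\boldsymbol{K}}^{(2)}$ (no weight-$12$ monomial involves $\phi_9$, $E_{10,\boldsymbol{K}}^{(2)}$ or $\phi_{45}$). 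These are linearly independent, since the two products have linearly independent images under the Siegel $\Phi$-operator while $E_{12,\boldsymbol{K}}^{(2)}-\tfrac{441}{691}(E_{4,\boldsymbol{K}}^{(2)})^3-\tfrac{250}{691}(E_{6,\boldsymbol{K}}^{(2)})^2=f_{12}$ is a nonzero cusp form (Theorem \ref{graded}(2)); hence the space is $3$-dimensional. Consequently the identity is forced once it is verified for three Fourier indices $H$ whose coefficient functionals are linearly independent on this space, e.g.\ $H=0$, a rank-$2$ form of determinant $2/3$, and the identity matrix; the non-vanishing of $H\mapsto a(f_{12};H)$ at the determinant-$2/3$ index (cf.\ Example \ref{f12}) makes this concrete.

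Next I would pin down the constants. The identity is precisely the definite Hermitian (unitary-group) instance of the analytic version of Siegel's main theorem: by Theorem \ref{hermLeech} the genus of rank-$12$ even unimodular Eisenstein lattices consists exactly of the five classes $H_1,\dots,H_5$, and the appropriately normalized Eisenstein series is a linear combination of their degree-$2$ theta series. Since the five theta series span the $3$-dimensional space above, the coefficients are not unique; the displayed ones are the canonical Siegel--Weil combination, whose entries are read off from the orders $|U(H_i)|$ of the unitary automorphism groups recorded in \cite{H-K-N}. The common denominator $691\cdot 809\cdot 1847$ is, up to units, the denominator of the mass of the genus, consistent with the Bernoulli- and $L$-value factors in the mass formula and with the $691$ already visible in $f_{12}$.

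Finally I would verify the three coefficient identities directly. Each $a(\vartheta^{(2)}(\cdot,H_i);H)$ is the number of $G\in M_{12,2}(\mathcal{O}_{\boldsymbol{K}})$ with ${}^t\overline{G}H_iG=H$, a finite count from the explicit Gram matrices of \cite{H-K-N}; each $a(E_{12,\boldsymbol{K}}^{(2)};H)$ comes from Krieg's explicit formula \cite{K}, applicable because $\boldsymbol{K}=\mathbb{Q}(\sqrt{3}\,i)$ has class number one, or alternatively from the Maass lift descriptions of $E_{4,\boldsymbol{K}}^{(2)}$ and $E_{6,\boldsymbol{K}}^{(2)}$. Checking that the proposed combination of the five theta-coefficient vectors agrees with the $E_{12,\boldsymbol{K}}^{(2)}$-coefficient vector at the three chosen indices then proves the proposition, by the first step.

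The principal obstacle is the second step carried out carefully: one must match the normalization of the unitary Siegel--Weil formula to the convention for $E_{12,\boldsymbol{K}}^{(2)}$ fixed in \S\ref{sec:2-2}, and combine the genuine mass weights with the linear relations among the $\vartheta^{(2)}(Z,H_i)$ so as to land on exactly the rational numbers displayed --- equivalently, one computes a few extra Fourier coefficients, solves the resulting over-determined linear system over $\mathbb{Q}$, and re-checks the solution. The theta computations of the third step, though lengthy, are mechanical, and the finite-dimensionality from the first step turns the whole argument into a finite verification.
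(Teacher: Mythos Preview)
Your plan ultimately coincides with the paper's proof, which is the single sentence ``the identity is obtained by the direct calculation of the Fourier coefficients of $E_{12,\boldsymbol{K}}^{(2)}$ and $\vartheta^{(2)}(Z,H_i)$''; your dimension count and choice of test indices simply make explicit why finitely many coefficients suffice.

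One point deserves correction. In your second step you assert that the displayed constants are the canonical Siegel--Weil mass weights $c/|U(H_i)|$ read off from \cite{H-K-N}. They are not: the coefficient of $\vartheta^{(2)}(Z,H_1)$ is \emph{negative}, which cannot arise from a genuine mass-weighted average. Since the five theta series lie in a $3$-dimensional space there is a $2$-parameter family of linear combinations equal to $E_{12,\boldsymbol{K}}^{(2)}$, and the one printed in the proposition is a particular solution of the Fourier-coefficient linear system, not the Siegel--Weil combination itself. The paper's remark that the identity is ``a special case of the analytic version of Siegel's main theorem'' is contextual rather than a proof device. Your own fallback---``equivalently, one computes a few extra Fourier coefficients, solves the resulting over-determined linear system over $\mathbb{Q}$, and re-checks the solution''---is exactly what the paper does and is all that is needed; the mass-formula discussion should be dropped.
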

\begin{proof}
The identity is obtained by the direct calculation of the Fourier coefficients of
$E_{12,\boldsymbol{K}}^{(2)}$ and $\vartheta^{(2)}(Z,H_i)$.
\end{proof}
\subsection{Sturm bound for Hermitian modular forms over the Eienstein field}
\label{sec:3-4}
To prove congruences among Hermitian modular forms, we recall some results
of the Sturm bound in the case of Hermitian modular forms over the Eisenstein
field.
\begin{Thm}
({\rm \cite{K-N}, Theorem 2}).
\label{sturm}
Let $\boldsymbol{K}=\mathbb{Q}(\sqrt{3}\,i)$ and $p$ be a prime number such
that $p\geq 5$. If a Hermitian modular form
$F\in M_k(\Gamma_2(\mathcal{O}_{\boldsymbol{K}}),\text{det}^k)_{\mathbb{Z}_{(p)}}^{sym}$,
with even weight $k$ satisfies
$$
a(F;H) \equiv 0 \pmod{p}
$$
for all $H=\binom{m\,*}{*\,n}\in\Lambda_2(\mathcal{O}_{\boldsymbol{K}})$ with
$m,\,n\leq \left[\frac{k}{9}\right]$,
then
$$
F \equiv 0 \pmod{p}.
$$
\end{Thm}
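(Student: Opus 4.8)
The plan is to induct on the weight $k$, peeling off the cusp form $\phi_9$ of Proposition~\ref{odd} twice on each step. Write $B:=\lfloor k/9\rfloor$ and assume $a(F;H)\equiv 0\pmod p$ for all $H=\binom{m\,*}{*\,n}\in\Lambda_2(\mathcal O_{\boldsymbol K})$ with $m,n\le B$; the goal is $F\equiv 0\pmod p$. For the base case $k\le 8$ one has $B=0$, so the hypothesis only says $a(F;0)\equiv 0\pmod p$; but by Theorem~\ref{graded} the space $M_k^{sym}$ is then at most one–dimensional, spanned by one of $1,E_{4,\boldsymbol K}^{(2)},E_{6,\boldsymbol K}^{(2)},(E_{4,\boldsymbol K}^{(2)})^{2}$, each of constant term $1$, so already $F\equiv 0\pmod p$.

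For the inductive step I reduce to known Sturm bounds via two restrictions. First, the Siegel $\Phi$–operator sends $F$ to an elliptic modular form of weight $k$ for $SL_2(\mathbb Z)$ with $m$-th coefficient $a(F;\binom{m\,0}{0\,0})$; since $\lfloor k/12\rfloor\le B$, Sturm's classical theorem forces $\Phi(F)\equiv 0\pmod p$. Second, I restrict $F$ to the symmetric locus $\{Z\in\mathbb H_2:{}^tZ=Z\}$, which is a copy of the Siegel half–space $\mathfrak H_2$ stabilized by $Sp_4(\mathbb Z)\subset\Gamma_2(\mathcal O_{\boldsymbol K})$ (on which $\det^k$ is trivial); this yields $\mathrm{Res}(F)\in M_k(Sp_4(\mathbb Z))_{\mathbb Z_{(p)}}$ whose Fourier coefficient at $T=\binom{m\,r/2}{r/2\,n}$ equals $\sum_{t:\,\mathrm{Tr}_{\boldsymbol K/\mathbb Q}(t)=r}a(F;\binom{m\,t}{\bar t\,n})$. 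These vanish mod $p$ for $m,n\le B$, and since the known Sturm bound for degree $2$ Siegel modular forms needs only the box $m,n\le\lfloor k/10\rfloor\le B$, we get $\mathrm{Res}(F)\equiv 0\pmod p$.

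Next I exploit Dern–Krieg's description of $\phi_9$: it is a Borcherds product whose divisor is the symmetric locus with multiplicity one, so multiplication by $\phi_9$ identifies $M^{sym}_{\bullet}$ with $M^{skew}_{\bullet+9}$ (a skew form vanishes on the symmetric locus, with symmetric quotient by $\phi_9$), and $\phi_9$ may be normalized to have a Fourier coefficient equal to $1$ at some $H_0$ of diagonal $(1,1)$, every element of $\mathrm{supp}(\phi_9)$ having both diagonal entries $\ge 1$. From $\mathrm{Res}(F)\equiv 0\pmod p$ one deduces $F\equiv\phi_9 G\pmod p$ with $G\in M^{skew}_{k-9}(\mathbb Z_{(p)})$, and a division–with–remainder computation (division by $\phi_9$, whose minimal support sits at diagonal $(1,1)$) gives $a(G;H')\equiv 0\pmod p$ for all $H'$ with $m',n'\le B-1=\lfloor(k-9)/9\rfloor$. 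Since $G$ is skew, $G\equiv\phi_9 G'\pmod p$ with $G'\in M^{sym}_{k-18}(\mathbb Z_{(p)})$ and $a(G';H'')\equiv 0\pmod p$ for $m'',n''\le\lfloor(k-18)/9\rfloor$; the induction hypothesis applied to $G'$ gives $G'\equiv 0\pmod p$, hence $F\equiv\phi_9^2 G'\equiv 0\pmod p$. The constant $9$ in the statement is precisely $\mathrm{wt}(\phi_9)$: each division by $\phi_9$ lowers the weight by $9$ and shrinks the admissible box by $1$ on each side.

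The hard part will be the implication ``$\mathrm{Res}(F)\equiv 0\pmod p\Rightarrow\phi_9\mid F\pmod p$''. Over $\mathbb C$ it is immediate from $\mathrm{div}(\phi_9)=\{{}^tZ=Z\}$, but modulo $p$ one only knows that the $q$-expansion of $F$, specialized by $\dot q_{12}=1$, is $\equiv 0\pmod p$, and upgrading this to divisibility inside the ring of $p$-integral Hermitian modular forms needs a $q$-expansion principle — concretely, that $\mathrm{Res}\colon M_k^{sym}(\mathbb Z_{(p)})\to M_k(Sp_4(\mathbb Z))_{\mathbb Z_{(p)}}$ has saturated image, equivalently a mod $p$ form of the identity $M^{skew}_{\bullet}=\phi_9\cdot M^{sym}_{\bullet-9}$; this should follow for $p\ge 5$ from $\phi_9$ having a unit Fourier coefficient, but it is the crux. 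Secondary points are to confirm that the degree $2$ Siegel Sturm bound invoked has box size $\le k/9$ (the value $\lfloor k/10\rfloor$ coming from the Igusa form $\chi_{10}$ suffices) and to carry out the division bookkeeping carefully.
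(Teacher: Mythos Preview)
The paper does not prove this theorem; it is quoted from \cite{K-N} without proof. There is therefore no ``paper's own proof'' to compare against here.

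That said, your strategy is precisely the one the paper uses for the odd-weight analogue, Proposition~\ref{Prop45}: restrict to the Siegel locus, invoke a Siegel Sturm bound to kill $F|_{\mathbb{S}_2}$ mod $p$, peel off $\phi_9$ twice, and induct with weight dropping by $18$ and the box shrinking by $2$. The paper packages the divisibility step as Lemma~\ref{Lem43} and the bookkeeping as Lemma~\ref{Lem44}, so your outline matches the paper's internal machinery almost exactly, just transported to even weight.

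You have also correctly located the crux. In the paper's Lemma~\ref{Lem43} the passage from $F|_{\mathbb{S}_2}\equiv 0\pmod p$ to $\phi_9\mid F\pmod p$ is achieved not by a $q$-expansion principle but by an integral lifting: one asserts the existence of $J\in M_\bullet(\Gamma_2(\mathcal{O}_{\boldsymbol K}),\det^\bullet)_{\mathbb{Z}_{(p)}}^{sym}$ with $pJ|_{\mathbb{S}_2}=F|_{\mathbb{S}_2}$, so that $(F-pJ)|_{\mathbb{S}_2}=0$ holds over $\mathbb{C}$, whence $F-pJ=\phi_9 G'$ exactly, and Lemma~\ref{Lem42} then gives $G'\in\mathbb{Z}_{(p)}[\![q]\!]$. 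This replaces your proposed saturation argument by the (equivalent) statement that the restriction map $M_k^{sym}(\mathbb{Z}_{(p)})\to M_k(Sp_4(\mathbb{Z}))_{\mathbb{Z}_{(p)}}$ is surjective; the paper does not justify this step in detail either, so your caution is warranted. Your secondary checks (the Siegel bound $\lfloor k/10\rfloor\le\lfloor k/9\rfloor$ and the base case via Theorem~\ref{graded}) are fine.
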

\section{Main results}
As stated in the introduction, the main purpose of this paper is to constuct
some examples of Hermitian modular forms in the mod $p$ kernel of the
theta operator.
\subsection{Hermitian theta series}
\label{sec:4-1}
In $\S$  \ref{sec:2-2} , we considered the Hermitian theta series $\vartheta^{(2)}(Z,H_i)$
where $H_i$\,$(i=1,\ldots,5)$ are the Hermitian matrices corresponding to the representatives
of Eisenstein lattices of rank $12$. The first main result shows that two of
$\vartheta^{(2)}(Z,H_i)$ are in the mod $11$ kernel of the theta operator.
\begin{Thm}
\label{maintheta}
We have
$$
\varTheta(E_{12,\boldsymbol{K}}^{(2)})
\equiv \varTheta(\vartheta^{(2)}(Z,H_4)) \equiv \varTheta(\vartheta^{(2)}(Z,H_5))
\equiv 0 \pmod{11},
$$
in particular, $\vartheta^{(2)}(Z,H_4)$ and $\vartheta^{(2)}(Z,H_5)$
are in the mod 11 kernel of the theta operator.
\end{Thm}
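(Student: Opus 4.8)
The plan is to reduce everything to a finite coefficient check via the Sturm bound (Theorem~\ref{sturm}), using the theta lift of Theorem~\ref{B-N-Hvarsion} and the Siegel main theorem identity of Proposition~\ref{siegel}. First I record that all three forms under consideration may be regarded as elements of $M_{12}(\Gamma_2(\mathcal{O}_{\boldsymbol{K}}),\text{det}^{12})_{\mathbb{Z}_{(11)}}^{sym}$. Indeed, each $\vartheta^{(2)}(Z,H_i)$ lies in $M_{12}(SL_4(\boldsymbol{K})\cap\Gamma_2(\mathcal{O}_{\boldsymbol{K}}),1)_{\mathbb{Z}}^{sym}=M_{12}(\Gamma_2(\mathcal{O}_{\boldsymbol{K}}),\text{det}^{12})_{\mathbb{Z}}^{sym}$ by \S\ref{sec:3-3} and (\ref{SL}); and by Proposition~\ref{siegel}, $E_{12,\boldsymbol{K}}^{(2)}$ is a $\mathbb{Q}$-linear combination of the $\vartheta^{(2)}(Z,H_i)$ whose denominators $691\cdot 809\cdot 1847$ are prime to $11$, so $E_{12,\boldsymbol{K}}^{(2)}$ lies in the same $\mathbb{Z}_{(11)}$-module (its identification as a form of character $\text{det}^{12}$ rather than $\text{det}^{-6}$ is again (\ref{SL})).

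Now fix $p=11$ and apply Theorem~\ref{B-N-Hvarsion} to each $F\in\{E_{12,\boldsymbol{K}}^{(2)},\,\vartheta^{(2)}(Z,H_4),\,\vartheta^{(2)}(Z,H_5)\}$: there is a cusp form $G\in S_{24}(\Gamma_2(\mathcal{O}_{\boldsymbol{K}}),\text{det}^{24})_{\mathbb{Z}_{(11)}}$ with $\varTheta(F)\equiv G\pmod{11}$. Since $F$ is symmetric, the coefficient of $q^H$ in $\varTheta(F)$ is $\text{det}(H)\,a(F;H)=\text{det}({}^tH)\,a(F;{}^tH)$, so $\varTheta(F)$ is itself symmetric; replacing $G$ by $\tfrac12(G+G\circ I_{tr})$ we may therefore assume $G\in S_{24}(\Gamma_2(\mathcal{O}_{\boldsymbol{K}}),\text{det}^{24})_{\mathbb{Z}_{(11)}}^{sym}$, a symmetric form of even weight $24$. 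As $[24/9]=2$, Theorem~\ref{sturm} reduces the congruence $G\equiv 0\pmod{11}$ to checking $\text{det}(H)\,a(F;H)\equiv 0\pmod{11}$ for all $H=\binom{m\,*}{*\,n}\in\Lambda_2(\mathcal{O}_{\boldsymbol{K}})$ with $m,n\le 2$. For such $H$ one has $0\le\text{det}(H)\le mn\le 4<11$, so the congruence is automatic when $\text{det}(H)=0$ and reduces to $a(F;H)\equiv 0\pmod{11}$ when $\text{det}(H)\in\{1,2,3,4\}$; there are only finitely many such $H$.

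For $F=\vartheta^{(2)}(Z,H_4)$ and $F=\vartheta^{(2)}(Z,H_5)$ the coefficient $a(F;H)$ is the representation number $\#\{\,G\in M_{12,2}(\mathcal{O}_{\boldsymbol{K}})\mid \tfrac12\,{}^t\overline{G}H_iG=H\,\}$, so the step above is the finite task of tabulating, for the rank $12$ Eisenstein lattices attached to $H_4$ and $H_5$ (Theorem~\ref{hermLeech}, with Gram matrices as in \cite{H-K-N}), these representation numbers at the relevant small $H$ and confirming divisibility by $11$. Granting this, $\varTheta(\vartheta^{(2)}(Z,H_4))\equiv\varTheta(\vartheta^{(2)}(Z,H_5))\equiv 0\pmod{11}$. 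Finally $\varTheta$ is $\mathbb{C}$-linear on $\mathbb{C}[\![q]\!]$, so Proposition~\ref{siegel} gives $\varTheta(E_{12,\boldsymbol{K}}^{(2)})=\sum_{i=1}^{5}c_i\,\varTheta(\vartheta^{(2)}(Z,H_i))$; the coefficients $c_1,c_2,c_3$ carry a factor $11$ in the numerator and a denominator prime to $11$, hence vanish mod $11$, while the two remaining terms vanish mod $11$ by what was just shown, so $\varTheta(E_{12,\boldsymbol{K}}^{(2)})\equiv 0\pmod{11}$ as well.

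The crux is the finite coefficient computation: one must enumerate the small $2\times 2$ integral Hermitian sections of the two rank $12$ Eisenstein lattices $H_4$ and $H_5$ — equivalently, compute the low-order part of the $q$-expansions of $\vartheta^{(2)}(Z,H_4)$ and $\vartheta^{(2)}(Z,H_5)$ from their Gram matrices — and verify that every $a(F;H)$ with $m,n\le 2$ and $\text{det}(H)\neq 0$ is divisible by $11$. (Alternatively, $E_{12,\boldsymbol{K}}^{(2)}$ may be handled directly via Krieg's explicit formula \cite{K} for $a(E_{12,\boldsymbol{K}}^{(2)};H)$, after which $\vartheta^{(2)}(Z,H_5)$ is recovered from Proposition~\ref{siegel} since $c_5$ is a unit mod $11$; either way only two of the three series need a direct computation.) Apart from the risk of an arithmetic slip in these tables, all the structural ingredients — the theta lift, the Sturm bound, and the Siegel identity — are already in place.
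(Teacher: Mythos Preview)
Your argument is correct and uses the same three ingredients as the paper --- the theta lift of Theorem~\ref{B-N-Hvarsion}, the Sturm bound of Theorem~\ref{sturm} at level $[24/9]=2$, and the Siegel identity of Proposition~\ref{siegel} reduced mod~$11$ --- so the approaches are essentially the same. The only difference is the order of deduction: the paper cites \cite{K-N}, Theorem~5 for $\varTheta(E_{12,\boldsymbol{K}}^{(2)})\equiv 0\pmod{11}$, then verifies $H_5$ by the single coefficient table (Table~1) and \emph{deduces} $H_4$ from the mod~$11$ reduction $E_{12,\boldsymbol{K}}^{(2)}\equiv 7\,\vartheta^{(2)}(Z,H_4)+5\,\vartheta^{(2)}(Z,H_5)$; you instead propose computing both $H_4$ and $H_5$ directly and deducing the Eisenstein case. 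The paper's route is slightly more economical (one table instead of two), while yours is self-contained. One small slip in your parenthetical alternative: since $c_4$ is also a unit mod~$11$, knowing $E_{12,\boldsymbol{K}}^{(2)}$ alone does not recover $\vartheta^{(2)}(Z,H_5)$ from Proposition~\ref{siegel}; you still need one of $H_4,H_5$ directly, which is consistent with your ``two of the three'' count but not with the sentence as written.
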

\begin{proof}
The statement that $\varTheta (E_{12,\boldsymbol{K}}^{(2)}) \equiv 0 \pmod{11}$
is a consequence of \cite{K-N}, Theorem 5.
Next we shall show that
$$
\varTheta(\vartheta^{(2)}(Z,H_5)) \equiv 0 \pmod{11}.
$$
We apply the setting in Theorem \ref{B-N-Hvarsion} to
$$
\begin{cases}
 F=\vartheta^{(2)}(Z,H_5)
\in M_{12}(\Gamma_2(\mathcal{O}_{\boldsymbol{K}}),\text{det}^{-6})_{\mathbb{Z}}
=M_{12}(\Gamma_2(\mathcal{O}_{\boldsymbol{K}}),\text{det}^{12})_{\mathbb{Z}},\\
 p=11.
 \end{cases}
 $$
Then there is a modular form 
$G\in S_{24}(\Gamma_2(\mathcal{O}_{\boldsymbol{K}}),\text{det}^{24})_{\mathbb{Z}_{(11)}}$
such that
$$
\varTheta (\vartheta^{(2)}(Z,H_5)) \equiv G \pmod{11}.
$$
The numerical data (Table 1 in $\S$ \ref{sec:5-1}) on the Fourier coefficients $a(\vartheta^{(2)}(Z,H_5);H)$
shows that
$$
\det(H)\cdot a(G;H) \equiv 0 \pmod{11}
$$
for all $H=\binom{m\,*}{*\,n}$ with $n,\,m \leq\left[\frac{24}{9}\right]=2$.
By Sturm's bound (Theorem \ref{sturm}), we obtain
$$
\varTheta (\vartheta^{(2)}(Z,H_5)) \equiv G \equiv  0 \pmod{11}.
$$
Finally, we prove 
$\varTheta(\vartheta^{(2)}(Z,H_4)) \equiv 0 \pmod{11}$. We recall that
$\varTheta (E_{12,\boldsymbol{K}}^{(2}) \equiv 0 \pmod{11}$. 
However, it follows from Proposition \ref{siegel} that
$$
7\vartheta^{(2)}(Z,H_4)+5\vartheta^{(2)}(Z,H_5) \equiv
E_{12,\boldsymbol{K}}^{(2)} \pmod{11}.
$$
Since $\varTheta(\vartheta^{(2)}(Z,H_5)) \equiv \varTheta (E_{12,\boldsymbol{K}}^{(2)}) \equiv 0 \pmod{11}$, we obtain
$$
\varTheta(\vartheta^{(2)}(Z,H_4)) \equiv 0 \pmod{11}.
$$
\end{proof}
A congruence relation similar to the above Theorem holds if
$\boldsymbol{K}$ is the Gaussian field (Theorem 8 in \cite{K-N}).
These facts lead us the following conjecture:
\vspace{1mm}
\\
{\it Any Hermitian theta series associated with the Hermitian Leech lattice
will be in the
mod $p$ kernel of the theta operator for a suitable  prime number $p$.}

\subsection{Odd weight forms}
\label{sec:4-2}
In $\S$ \ref{sec:3-1}, we saw a set of generators of the graded ring
$\oplus M_k(\Gamma_2(\mathcal{O}_{\boldsymbol{K}}),\text{det}^{k})$ 
(cf. Theorem \ref{graded}). There are two modular forms with odd
weight in the set of generators. The second main result shows that these
forms are in the kernel of the theta operator.
\vspace{2mm}
\\
Let $\phi_9$ and $\phi_{45}$ be odd weight modular forms
given in Theorem 3.3. We assume that they are normalized as
$$
a\left( \phi_9;\begin{pmatrix}1 & -1/\sqrt{3}i \\ 1/\sqrt{3}i & 1
  \end{pmatrix}\right)
=
a\left( \phi_{45};\begin{pmatrix} 3 & 1 \\ 1 &  4
  \end{pmatrix}\right)
=1,
$$
(i.e., the {\it first} Fourier coefficient is equal to one).
\\
Now we use the following abbreviations for 
$H \in \Lambda_2(\mathcal{O}_{\boldsymbol{K}})$:
\begin{equation}
\label{abb}
H=
\begin{pmatrix} m & (a+b\sqrt{3}i)/2\sqrt{3}i \\
              (-a+b\sqrt{3}i)/2\sqrt{3}i  & n
\end{pmatrix}
=:(m,n,a,b)
\end{equation}
and
$$
q^{(m,n,a,b)}:=q^H=\text{exp}(2\pi i\text{tr}(HZ)).
$$
By numerical computation, a few Fourier coefficients of
$\phi_9$ are given as follows:
\begin{align*}
\phi_9=q^{(1,1,-2,0)}&-q^{(1,1,-1,-1)}-q^{(1,1,-1,1)}\\
  &+q^{(1,1,1,-1)}+q^{(1,1,1,1)}-q^{(1,1,2,0)}
+\sum_{H}a(\phi_9;H)q^H.
\end{align*}
In the last summation, $H$ runs over the elements of 
$\Lambda_2(\mathcal{O}_{\boldsymbol{K}})_{>0}$ such
that $H=\binom{m\, *}{*\, n}$ with $\text{max}(m,n)>1$.
\\
The coefficients of $\phi_{45}$ are given as follows:
\begin{align*}
\phi_{45}=& q^{(3,4,-3,-1)}-q^{(3,4,-3,1)}-q^{(3,4,0,2)}\\
&+q^{(3,4,3,-1)}-q^{(3,4,3,1)}-q^{(4,3,-3,-1)}+q^{(4,3,-3,1)}\\
&+q^{(4,3,0,-2)}-q^{(4,3,0,2)}-q^{(4,3,3,-1)}+q^{(4,3,3,1)}\\
&+\sum_{H}a(\phi_{45};H)q^H,
\end{align*}
where                          
$H$ runs over elements of 
$\Lambda_2(\mathcal{O}_{\boldsymbol{K}})_{>0}$ such
that $H=\binom{m\, *}{*\, n}$ with $\text{max}(m,n)>4$.
(Further examples of $a(\phi_9;H)$ and $a(\phi_{45};H)$ 
are given in $\S$ 5.)
\begin{Lem}
\label{Lem42}
We have
$$
\phi_9\in 
S_9(\Gamma_2(\mathcal{O}_{\boldsymbol{K}}),1)_{\mathbb{Z}}^{skew}
\quad \text{and}\quad
\phi_{45}\in
S_{45}(\Gamma_2(\mathcal{O}_{\boldsymbol{K}}),1)_{\mathbb{Z}}^{sym}.
$$
Moreover, let 
$\phi=\phi_9$ or $\phi_{45}$ and $f\in R[\![ q]\!]$ for some
ring $R\subset\mathbb{C}$, 
if there exists $g\in \mathbb{C}[\![ q]\!]$ such
that $f=\phi g$, then we have
$$
g\in R[\![ q]\!].
$$ 
\end{Lem}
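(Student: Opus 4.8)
The first assertion — that $\phi_9$ and $\phi_{45}$ lie in the indicated spaces with integral Fourier coefficients — is essentially recorded already: membership in $S_9(\Gamma_2(\mathcal{O}_{\boldsymbol{K}}),1)^{skew}$ and $S_{45}(\Gamma_2(\mathcal{O}_{\boldsymbol{K}}),1)^{sym}$ comes from Proposition~\ref{odd}, and integrality of the coefficients follows from the Borcherds product construction in \cite{D-K}, or alternatively can be read off from the fact that the normalized first Fourier coefficient is $1$ together with the Maa\ss-lift-type recursions for the generators $E_{4,\boldsymbol{K}}^{(2)},\dots$ of the graded ring in Theorem~\ref{graded}. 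So the real content is the divisibility statement: if $f = \phi g$ with $f \in R[\![q]\!]$ and $g \in \mathbb{C}[\![q]\!]$, then $g \in R[\![q]\!]$.

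My plan is to reduce this to the statement that $\phi$ is, up to a unit, a \emph{monic} element of the formal power series ring $\mathbb{C}[\![q]\!] = \mathbb{C}[\dot q_{12}^{\pm1},\ddot q_{12}^{\pm1}][\![\dot q_{11},\dot q_{22}]\!]$ with respect to a suitable monomial order, with the leading coefficient lying in $R^\times$ (indeed equal to $1$ after normalization). Concretely: order Fourier exponents $H = (m,n,a,b)$ first by $\min(m,n)$, or by $m+n$, and within a fixed such value use the displayed ``first'' coefficients. The displayed expansions show that the lowest-order term of $\phi_9$ is supported on $H$ with $m = n = 1$ and has coefficient $\pm1$ (e.g.\ the $q^{(1,1,-2,0)}$ term), and similarly the lowest-order term of $\phi_{45}$ sits at $m+n = 7$ with coefficient $\pm1$. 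The key point is that the finitely many exponents occurring at this minimal level, viewed in the Laurent part $\mathbb{C}[\dot q_{12}^{\pm1},\ddot q_{12}^{\pm1}]$, can be cleared by multiplying by a single monomial $\dot q_{12}^{\alpha}\ddot q_{12}^{\beta}$ so that $\phi$ becomes a genuine power series whose ``initial form'' — the sum of lowest-total-degree terms in $\dot q_{11},\dot q_{22}$ — is a unit in $\mathbb{C}[\dot q_{12}^{\pm1},\ddot q_{12}^{\pm1}]$ (a Laurent polynomial that is $\pm$ a single monomial, hence invertible).

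Granting that, the division argument is the standard one: write $f = \phi g$, and solve for the coefficients of $g$ recursively by increasing order. At each stage the unknown coefficient of $g$ is obtained by dividing a known element of $R[\![q]\!]$-data by the unit leading coefficient of $\phi$, which lies in $R^\times$; hence every coefficient of $g$ stays in $R$. Formally: after multiplying $\phi$ by a monomial unit we may assume $\phi = 1 + (\text{higher order})$ with all coefficients in $\mathbb{Z} \subset R$; then $g = f\cdot \phi^{-1}$ and $\phi^{-1} \in R[\![q]\!]$ because the inverse of $1 + (\text{higher order})$ is computed by a geometric series with $\mathbb{Z}$-coefficients. Since the monomial unit we multiplied by is invertible over $R$ as well, we recover $g \in R[\![q]\!]$. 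One must check that this formal inversion makes sense in $\mathbb{C}[\![q]\!]$ as defined in \eqref{power}, i.e.\ that the relevant order function (total degree in $\dot q_{11},\dot q_{22}$) is such that only finitely many terms contribute to each coefficient — this is exactly why $\mathbb{C}[\![q]\!]$ was set up with the Laurent variables $\dot q_{12},\ddot q_{12}$ separated out.

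The main obstacle is the bookkeeping in the previous paragraph's first step: verifying that the lowest-order part of $\phi$ (for $\phi_9$ and for $\phi_{45}$ separately) really is a monomial times a unit, and not, say, a Laurent polynomial in $\dot q_{12},\ddot q_{12}$ with several terms that fails to be invertible. For $\phi_9$ this is transparent from the six displayed terms, which after clearing denominators become $\pm$ powers of $\dot q_{12}$ with a common $\dot q_{11}\dot q_{22}$ factor; one checks the resulting Laurent polynomial in $\dot q_{12}$ is a monomial (or a cyclotomic-type unit) — and if it is genuinely a sum of several monomials, one instead argues that $\phi$ is \emph{primitive} (its coefficients generate the unit ideal of $\mathbb{Z}$) and invokes a Gauss-lemma-style argument over the factorial ring $\mathbb{Z}[\dot q_{12}^{\pm1},\ddot q_{12}^{\pm1}][\![\dot q_{11},\dot q_{22}]\!]$: a product $\phi g$ has all coefficients in $R$ while $\phi$ is primitive over $\mathbb{Z}$ forces $g$ to have coefficients in $R$. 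I expect the cleanest writeup to use this Gauss-lemma formulation, since it sidesteps having to identify the precise initial form and only uses that \emph{some} Fourier coefficient of $\phi$ equals $1$ (which is the normalization we imposed), together with the fact that $\mathbb{C}[\![q]\!]$ is an integral domain containing $\mathbb{Z}[\![q]\!]$ as the expected subring.
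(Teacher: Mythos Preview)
Your overall strategy---$\phi$ is monic under a suitable monomial order, then divide recursively---is exactly the paper's one-line argument. But the execution has two genuine gaps. First, the ``initial form'' of $\phi_9$ at the minimal $(\dot q_{11},\dot q_{22})$-degree is the six-term Laurent polynomial visible in the displayed expansion of $\phi_9$ (and again in the proof of Lemma~\ref{Lem44}); it is \emph{not} $\pm$ a single monomial, hence not a unit in $\mathbb{C}[\dot q_{12}^{\pm1},\ddot q_{12}^{\pm1}]$, so you cannot arrange $\phi=1+(\text{higher})$ by multiplying by a monomial and then invert via a geometric series. Second, the Gauss-lemma fallback fails for an arbitrary subring $R\subset\mathbb{C}$: already in one variable, $\phi=2+x\in\mathbb{Z}[\![x]\!]$ is primitive and $\phi\cdot(2+x)^{-1}=1\in\mathbb{Z}[\![x]\!]$, yet $(2+x)^{-1}\notin\mathbb{Z}[\![x]\!]$. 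Primitivity alone is not enough; what is actually needed is that the \emph{leading} coefficient, in whatever order drives the recursion, be $\pm1$.

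The remedy is to carry your own first suggestion through without coarsening to the $(\dot q_{11},\dot q_{22})$-grading: put a total monomial order on the exponents $(m,n,a,b)$, for instance lexicographic on $(m,n)$ and then lexicographic on $(a,b)$. The displayed expansions then show that the minimal monomial in the support of $\phi_9$ (respectively $\phi_{45}$) has coefficient $1$. The recursion
\[
g_{H}\;=\;f_{H+H_0}\;-\;\sum_{H'>H_0}\phi_{H'}\,g_{H+H_0-H'}
\]
(with $H_0$ the minimal exponent of $\phi$) expresses each $g_H$ as a $\mathbb{Z}$-linear combination of coefficients of $f$ and of strictly earlier $g_{H''}$, giving $g_H\in R$ by induction. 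Well-foundedness holds because every element of $\mathbb{C}[\![q]\!]$ has, at each fixed $(m,n)$, only finitely many nonzero coefficients (the $\dot q_{12},\ddot q_{12}$-part is a Laurent \emph{polynomial}), so the order restricted to any support is a well-order. This is precisely what the paper means by ``monic if we define a suitable order of monomials.''
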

\begin{proof}
The first statement of integrality follows from the fact that
$\phi_9$ and $\phi_{45}$ are Borcherds products.
The second statement follows from the above explicit Fourier
expansions of $\phi_9$ and $\phi_{45}$ because they are monic if
we define a suitable order of monomials.
\end{proof}
We denote by $\mathbb{S}_2$ the Siegel upper-half space of
degree 2. This is characterized by
$$
\mathbb{S}_2=\{\, Z\in \mathbb{H}_2\,\mid\, {}^tZ=Z\, \}.
$$
\begin{Lem}
\label{Lem43}
Let $p\geq 3$ be a prime number and
$F\in 
M_k(\Gamma_2(\mathcal{O}_{\boldsymbol{K}}),\text{det}^k)
_{\mathbb{Z}_{(p)}}^{sym}$ with $k$ odd. 
If $F|_{\mathbb{S}_2} \equiv 0 \pmod{p}$, then there exists
$$  G\in 
M_{k-18}(\Gamma_2(\mathcal{O}_{\boldsymbol{K}}),\text{det}^{k-18})
_{\mathbb{Z}_{(p)}}^{sym}
$$
such that $F \equiv \phi_9^2 G \pmod{p}$.
\end{Lem}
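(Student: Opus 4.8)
The plan is to reduce the assertion modulo $p$ and to argue with $q$-expansions, using that the reduction map $M_k(\Gamma_2(\mathcal{O}_{\boldsymbol{K}}),\text{det}^k)_{\mathbb{Z}_{(p)}}\otimes\mathbb{F}_p\hookrightarrow\mathbb{F}_p[\![ q]\!]$ is injective, so that it suffices to exhibit $G$ and to check the congruence $F\equiv\phi_9^2G\pmod p$ coefficientwise. In the model $(\ref{power})$ the exceptional involution $I_{tr}$ acts by $\dot{q}_{12}\mapsto\dot{q}_{12}^{-1}$ and fixes $\ddot{q}_{12},\dot{q}_{11},\dot{q}_{22}$, so the restriction of a $q$-expansion to $\mathbb{S}_2$ is the specialization $\dot{q}_{12}=1$. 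Since $p\geq3$ and $\phi_9$ is skew-symmetric, $\bar\phi_9$ vanishes on $\mathbb{S}_2$; from the Fourier expansion of $\phi_9$ displayed above it vanishes along $\{\dot{q}_{12}=1\}$ to order \emph{exactly} $1$ (leading term $\dot{q}_{11}\dot{q}_{22}(\dot{q}_{12}-\dot{q}_{12}^{-1})(\dot{q}_{12}+\dot{q}_{12}^{-1}-\ddot{q}_{12}-\ddot{q}_{12}^{-1})$), hence $\bar\phi_9^{\,2}$ vanishes there to order exactly $2$.

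I would next record the elementary fact that a \emph{symmetric} Laurent series in $\dot{q}_{12}$ vanishing at $\dot{q}_{12}=1$ necessarily vanishes there to order $\geq2$: writing $\bar F=d_0+\sum_{n\geq1}d_n(\dot{q}_{12}^{n}+\dot{q}_{12}^{-n})$ with $d_n\in\mathbb{F}_p[\ddot{q}_{12}^{\pm1}][\![\dot{q}_{11},\dot{q}_{22}]\!]$, the hypothesis gives $\bar F|_{\dot{q}_{12}=1}=0$, while $(\partial\bar F/\partial\dot{q}_{12})|_{\dot{q}_{12}=1}=0$ holds automatically because $\partial(\dot{q}_{12}^{n}+\dot{q}_{12}^{-n})/\partial\dot{q}_{12}$ vanishes at $\dot{q}_{12}=1$. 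Hence $\bar F$, like $\bar\phi_9^{\,2}$, is divisible by $(\dot{q}_{12}-1)^2$ in the Laurent polynomial ring over $\mathbb{F}_p[\ddot{q}_{12}^{\pm1}][\![\dot{q}_{11},\dot{q}_{22}]\!]$. This is only the easy, necessary part; the substance is to upgrade ``order $\geq2$ along $\mathbb{S}_2$'' to divisibility by the modular form $\phi_9^2$ itself.

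For that I would invoke the structure theory. Over $\mathbb{C}$, $\phi_9$ is the Borcherds product whose divisor on $\mathbb{H}_2/\Gamma_2(\mathcal{O}_{\boldsymbol{K}})$ is the reduced image of $\mathbb{S}_2$ (Proposition $\ref{odd}$, \cite{D-K}); hence a symmetric $H\in M_\ell(\Gamma_2(\mathcal{O}_{\boldsymbol{K}}),\text{det}^\ell)$ vanishing on $\mathbb{S}_2$ — therefore, by the previous step, to order $\geq2$ — is divisible by $\phi_9^2$, the quotient being holomorphic on $\mathbb{H}_2$, symmetric, of weight $\ell-18$ and character $\text{det}^{\ell-18}$ (note $\text{det}^{18}=1$, since $\mathcal{O}_{\boldsymbol{K}}^{\times}$ has order $6$). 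Integrality of the quotient is precisely Lemma $\ref{Lem42}$, so over $\mathbb{Z}_{(p)}$ a symmetric form vanishes on $\mathbb{S}_2$ if and only if it is $\phi_9^2$ times a symmetric form. To bring the statement down to mod $p$, I would reduce this description together with Theorem $\ref{graded}$: as $k$ is odd, every symmetric monomial of weight $k$ in $E_4,E_6,\phi_9,E_{10},E_{12},\phi_{45}$ carries a factor $\phi_{45}$, so $F=\phi_{45}F_0$ with $F_0\in M_{k-45}(\Gamma_2(\mathcal{O}_{\boldsymbol{K}}),\text{det}^{k-45})^{sym}_{\mathbb{Z}_{(p)}}$ of \emph{even} weight (again by Lemma $\ref{Lem42}$); because the ring of mod $p$ Hermitian modular forms is an integral domain and $F|_{\mathbb{S}_2}\equiv0\pmod p$, either $F_0|_{\mathbb{S}_2}\equiv0\pmod p$ — in which case the even-weight case gives $F_0\equiv\phi_9^2G_0\pmod p$ and hence $F\equiv\phi_9^2(\phi_{45}G_0)\pmod p$ — or else $\phi_{45}|_{\mathbb{S}_2}\equiv0\pmod p$. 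This latter ``exceptional'' case, and more generally the possibility that modulo $p$ there are strictly more $\mathbb{S}_2$-vanishing classes than the $\phi_9^2$-multiples, I would settle with the explicit Fourier data of $\phi_9$ and $\phi_{45}$ together with the Sturm bound, Theorem $\ref{sturm}$: define $G$ from the first few Fourier coefficients of $F$ (division by $\phi_9^2$ being legitimate by Lemma $\ref{Lem42}$) and verify $F-\phi_9^2G\equiv0\pmod p$ for all $H=\binom{m\,*}{*\,n}$ with $m,n\leq\left[\frac{k-18}{9}\right]$.

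The genuine obstacle is this last, arithmetic, step. The complex-analytic argument (divisor of $\phi_9$ equals $\mathbb{S}_2$, hence divisibility) is clean but does not survive reduction mod $p$ by itself, because the cokernel of restriction to $\mathbb{S}_2$-forms can carry $p$-torsion; this is exactly what keeps the lemma from being formal, and it is not detected by the graded-ring structure alone — it must be certified by the finite computation that Theorem $\ref{sturm}$ makes possible. The remaining bookkeeping — that $\phi_9^2G$ has the asserted weight, character and symmetry, and that the reduction to the even-weight case terminates correctly (for small odd $k$, $M_{k-18}^{sym}=0$, forcing $F\equiv0\pmod p$) — is routine.
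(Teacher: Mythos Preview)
Your reduction $F=\phi_{45}F_0$ with $F_0$ symmetric of even weight, together with the observation that $\phi_{45}|_{\mathbb{S}_2}=\chi_{10}\chi_{35}\not\equiv0\pmod p$ forces $F_0|_{\mathbb{S}_2}\equiv0\pmod p$, matches the paper's opening move; and your remark that symmetry upgrades order-$1$ vanishing along $\{\dot q_{12}=1\}$ to order $\geq 2$ is correct and pleasant. The gap is precisely in the ``arithmetic step'' you flag at the end. Your proposed resolution---form $G$ as the formal quotient $F/\phi_9^2$ via Lemma~\ref{Lem42} and then invoke a Sturm bound---does not close it. If $G$ is merely a formal $q$-series then Theorem~\ref{sturm} says nothing about $F-\phi_9^2G$ (which is identically zero anyway); if $G$ is to be an honest modular form, its existence is exactly what must be proved. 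Moreover Theorem~\ref{sturm} is stated only for \emph{even} weight, while $F$ has odd weight; the odd-weight Sturm bound is Proposition~\ref{Prop45}, whose proof \emph{uses} the present lemma, so invoking it here is circular. The same circularity infects your appeal to ``the even-weight case gives $F_0\equiv\phi_9^2G_0$'': that even-weight divisibility statement is nowhere established and is of the same strength as the lemma itself.

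The paper's device, which you are missing, is a lifting trick that converts the mod~$p$ vanishing on $\mathbb{S}_2$ into \emph{exact} vanishing, after which the characteristic-zero argument applies cleanly. Having $h=F_0$ with $h|_{\mathbb{S}_2}\equiv0\pmod p$, one notes that $h|_{\mathbb{S}_2}/p$ is a Siegel modular form of even weight with $\mathbb{Z}_{(p)}$-coefficients, and that such a form lifts to some $J\in M_{k-45}(\Gamma_2(\mathcal{O}_{\boldsymbol{K}}),\det^{k-45})_{\mathbb{Z}_{(p)}}^{sym}$ with $J|_{\mathbb{S}_2}=h|_{\mathbb{S}_2}/p$. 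Then $F-p\,\phi_{45}J\equiv F\pmod p$ and $(F-p\,\phi_{45}J)|_{\mathbb{S}_2}=0$ on the nose, so the divisor description of $\phi_9$ in \cite{D-K} together with Lemma~\ref{Lem42} gives $F-p\,\phi_{45}J=\phi_9G'$ with $G'\in M_{k-9}(\Gamma_2(\mathcal{O}_{\boldsymbol{K}}),\det^{k-9})_{\mathbb{Z}_{(p)}}^{skew}$; one further application of \cite{D-K}, Lemma~2 and Lemma~\ref{Lem42} to the skew form $G'$ of even weight yields $G'=\phi_9G$. This passage from ``$\equiv0$ on $\mathbb{S}_2$ mod $p$'' to ``$=0$ on $\mathbb{S}_2$ after a $p$-multiple correction'' is the substantive idea, and it is what replaces the Sturm-type verification you were reaching for.
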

\begin{proof}
By Lemma 2 of \cite{D-K} and Lemma \ref{Lem42}, there exists
$$h\in M_{k-45}(\Gamma_2(\mathcal{O}_{\boldsymbol{K}}),\text{det}^{k-45})
_{\mathbb{Z}_{(p)}}^{sym}$$  
such that $F=\phi_{45}h$.
It is known that 
$\phi_{45}|_{\mathbb{S}_2}=\chi_{10}\chi_{35}$ where 
$\chi_{k}$ are Igusa's Siegel cusp forms of degree 2. 
Considering the Fourier expansion of 
$\chi_{10}\chi_{35}$, we see that $h|_{\mathbb{S}_2} \equiv 0 \pmod{p}$.
Since there exists
$J\in 
M_{k-45}(\Gamma_2(\mathcal{O}_{\boldsymbol{K}}),\text{det}^{k-45})
_{\mathbb{Z}_{(p)}}^{sym}$
such that $p\,J|_{\mathbb{S}_2}=h|_{\mathbb{S}_2}$, we have
$$
(F-p\,\phi_{45}J)|_{\mathbb{S}_2}=0.
$$
By \cite{D-K} and Lemma \ref{Lem42}, there exists
$$
G' \in 
M_{k-9}(\Gamma_2(\mathcal{O}_{\boldsymbol{K}}),\text{det}^{k-9})
_{\mathbb{Z}_{(p)}}^{skew}
$$
such that $F \equiv \phi_9G' \pmod{p}$. Again by Lemma 2 of \cite{D-K}
and Lemma \ref{Lem42}, there is
$G \in 
M_{k-18}(\Gamma_2(\mathcal{O}_{\boldsymbol{K}}),\text{det}^{k-18})
_{\mathbb{Z}_{(p)}}^{sym}$ such that
$G'=\phi_9G$.
\end{proof}
\begin{Lem}
\label{Lem44}
Let
$\displaystyle F=\sum_{m,n \geq 0}a_{m,n}(F;\dot{q}_{12},\ddot{q}_{12})q_{11}^m q_{22}^{n}
\in \mathbb{Z}_{(p)}[\![ q]\!]$ (cf. (\ref{power})).
Suppose that there is $G\in \mathbb{Z}_{(p)}[\![ q]\!]$ such that
$F \equiv \phi_9 G \pmod{p}$ and
$$
a_{m,n}(F;\dot{q}_{12},\ddot{q}_{12}) \equiv 0 \pmod{p} 
$$
for all $m,n \leq N$. Then we have
$$
a_{m,n}(G;\dot{q}_{12},\ddot{q}_{12}) \equiv 0 \pmod{p} 
$$
for all $m,n\leq N-1$.
\end{Lem}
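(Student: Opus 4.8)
The plan is to exploit the explicit shape of the Fourier expansion of $\phi_9$ recorded above: as a power series in $\dot q_{11}$ and $\dot q_{22}$ with coefficients in the Laurent ring $\mathbb{C}[\dot q_{12}^{\pm 1},\ddot q_{12}^{\pm 1}]$, $\phi_9$ has lowest bidegree $(1,1)$ in $(\dot q_{11},\dot q_{22})$, and its coefficient at that bidegree is the fixed Laurent polynomial
$$
u(\dot q_{12},\ddot q_{12}) := \dot q_{12}^{-2}-\dot q_{12}^{-1}\ddot q_{12}^{-1}-\dot q_{12}^{-1}\ddot q_{12}+\dot q_{12}\ddot q_{12}^{-1}+\dot q_{12}\ddot q_{12}-\dot q_{12}^{2}
$$
(reading off the six displayed monomials $q^{(1,1,a,b)}$), which is a \emph{unit times a monic polynomial} after the normalization of Lemma 4.13 — this is exactly the monicity used in Lemma 4.12. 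So I would write $\phi_9=\sum_{m,n\ge 1}\varphi_{m,n}\,\dot q_{11}^m\dot q_{22}^n$ with $\varphi_{1,1}=u$ a unit in the relevant coefficient ring (after inverting the leading monomial, which is harmless since we work in $\mathbb{Z}_{(p)}[\dot q_{12}^{\pm1},\ddot q_{12}^{\pm1}]$), and $G=\sum_{m,n\ge 0}g_{m,n}\dot q_{11}^m\dot q_{22}^n$ with $g_{m,n}=a_{m,n}(G)$.

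Next I would expand the congruence $F\equiv\phi_9 G\pmod p$ in bidegree $(\dot q_{11},\dot q_{22})$ and compare coefficients. The coefficient of $\dot q_{11}^{M+1}\dot q_{22}^{N+1}$ in $\phi_9 G$ is
$$
a_{M+1,N+1}(F)\equiv \sum_{\substack{m\ge 1,\ n\ge 1\\ m\le M+1,\ n\le N+1}}\varphi_{m,n}\,g_{M+1-m,\,N+1-n}\pmod p,
$$
and the term $(m,n)=(1,1)$ contributes $u\cdot g_{M,N}$. The plan is to prove $g_{M,N}\equiv 0\pmod p$ for all $M,N\le N-1$ (in the lemma's notation, for $M,N$ with $\max(M,N)\le N-1$) by a double induction on $M+N$. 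In the inductive step, for $(M,N)$ with $M,N\le N-1$ one has $M+1,N+1\le N$, so the left side $a_{M+1,N+1}(F)\equiv 0\pmod p$ by hypothesis; every term on the right with $(m,n)\ne(1,1)$ has $m\ge 2$ or $n\ge 2$, hence involves $g_{M',N'}$ with $M'\le M-1$ or $N'\le N-1$ and $M'+N'<M+N$, so it vanishes mod $p$ by the induction hypothesis together with the base cases. Therefore $u\cdot g_{M,N}\equiv 0\pmod p$, and since $u$ is a unit (a unit times a monic polynomial, which is a non–zero–divisor) in $\mathbb{Z}_{(p)}[\dot q_{12}^{\pm1},\ddot q_{12}^{\pm1}]$, we conclude $g_{M,N}\equiv 0\pmod p$. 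The base of the induction is the case $M=N=0$: the coefficient of $\dot q_{11}\dot q_{22}$ gives $a_{1,1}(F)\equiv u\cdot g_{0,0}\pmod p$ with $a_{1,1}(F)\equiv 0$, hence $g_{0,0}\equiv 0$; the "mixed" low cases (one index zero) are handled the same way since then the corresponding coefficient of $\phi_9G$ is identically $0$ on the left of the relevant bidegree or is forced by the same unit argument.

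The one genuinely delicate point is bookkeeping with the Laurent (negative) exponents in $\dot q_{12},\ddot q_{12}$: one must make sure that "comparing coefficients of $\dot q_{11}^m\dot q_{22}^n$" is legitimate, i.e. that the product $\phi_9 G$ is a well-defined element of $\mathbb{Z}_{(p)}[\dot q_{12}^{\pm1},\ddot q_{12}^{\pm1}][\![\dot q_{11},\dot q_{22}]\!]$, which is precisely the content of the description (\ref{power}) of $\mathbb{C}[\![q]\!]$ — only finitely many $(\dot q_{11},\dot q_{22})$-bidegrees are involved in each coefficient, and within a fixed bidegree the Laurent polynomials can be manipulated freely. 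The other thing to check is that $u$ is a non–zero–divisor, equivalently a unit, in that coefficient ring: this is immediate from the explicit six-term expression, which up to multiplication by the monomial $\dot q_{12}^{-2}\ddot q_{12}^{-1}$ is a monic polynomial in, say, $\dot q_{12}$ over $\mathbb{Z}_{(p)}[\ddot q_{12}^{\pm1}]$ with leading coefficient a unit — this is exactly the monicity invoked in the proof of Lemma 4.12, so no new work is needed. With these two observations in place the induction goes through mechanically and yields the claimed vanishing of $a_{m,n}(G)$ for all $m,n\le N-1$.
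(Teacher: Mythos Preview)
Your argument is correct and is precisely the approach the paper has in mind: the paper's proof consists of writing out that $\phi_9$, as an element of $\mathbb{Z}_{(p)}[\![q]\!]$, has lowest $(\dot q_{11},\dot q_{22})$-bidegree $(1,1)$ with the explicit six-term Laurent polynomial $u$ as leading coefficient, and then simply says ``The statement follows from this expression.'' Your coefficient-comparison and double induction on $M+N$ is exactly how one unpacks that sentence.

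Two minor points of presentation. First, you overload the symbol $N$ (using it both for the bound in the lemma and for a running index), which makes the write-up hard to parse even though the logic is fine. Second, $u$ is not a unit in $\mathbb{Z}_{(p)}[\dot q_{12}^{\pm1},\ddot q_{12}^{\pm1}]$; what you actually need and use is that $u\not\equiv 0\pmod p$ (its coefficients are $\pm 1$), so its image in the integral domain $\mathbb{F}_p[\dot q_{12}^{\pm1},\ddot q_{12}^{\pm1}]$ is a non-zero-divisor, whence $u\cdot g_{M,N}\equiv 0\pmod p$ forces $g_{M,N}\equiv 0\pmod p$. Your parenthetical ``which is a non-zero-divisor'' shows you know this; just drop the word ``unit.''
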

\begin{proof}
As an element of $\mathbb{Z}_{(p)}[\![ q]\!]$, $\phi_9$ is
given by
\begin{align*}
\phi_9 &= \dot{q}_{11}\ddot{q}_{22}
(\dot{q}_{12}^{-2}-\dot{q}_{12}^{-1}\ddot{q}_{12}^{-1}
-\dot{q}_{12}^{-1}\ddot{q}_{12}
+\dot{q}_{12}\ddot{q}_{12}^{-1}
+\dot{q}_{12}\ddot{q}_{12}
-\ddot{q}_{12}^{2})\\ 
& +\sum_{\text{max}(m,n)>1}a(\phi_9;\dot{q}_{12},\ddot{q}_{12})
\dot{q}_{11}^m\dot{q}_{22}^n,\qquad\text{(cf. (\ref{power}))}.
\end{align*}
The statement follows from this expression.
\end{proof}
\begin{Prop}
\label{Prop45}
Let $p \geq 3$ be a prime number. Suppose that $k \geq 0$ is odd
and 
$F \in 
M_{k}(\Gamma_2(\mathcal{O}_{\boldsymbol{K}}),\text{det}^{k})
_{\mathbb{Z}_{(p)}}$.
If
$$
a(F;H) \equiv 0 \pmod{p}
$$
for any 
$H=\binom{m\, *}{*\, n}\in \Lambda_2(\mathcal{O}_{\boldsymbol{K}})_{\geq 0}$
such that $m,n\leq [k/9]-1$. Then we have $a(F;H) \equiv 0 \pmod{p}$ for any
$H \in \Lambda(\mathcal{O}_{\boldsymbol{K}})_{\geq 0}$, namely, 
$F \equiv 0 \pmod{p}$.
\end{Prop}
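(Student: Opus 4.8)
The plan is to decompose $F$ under the transpose involution $I_{tr}$ and treat the symmetric and skew parts separately, in each case stripping off the odd-weight factor that the structure theory of \cite{D-K} forces ($\phi_9$ for the skew part, $\phi_9^{2}$ for the symmetric part), after which only the even-weight Sturm bound (Theorem~\ref{sturm}), the division Lemmas~\ref{Lem42} and~\ref{Lem44}, and an induction on $k$ are needed. Since $p\ge 3$ we have $\tfrac12\in\mathbb{Z}_{(p)}$, so I would write $F=F^{+}+F^{-}$ with $F^{\pm}=\tfrac12(F\pm F\circ I_{tr})$ lying respectively in $M_k(\Gamma_2(\mathcal{O}_{\boldsymbol{K}}),\det^{k})_{\mathbb{Z}_{(p)}}^{sym}$ and $M_k(\Gamma_2(\mathcal{O}_{\boldsymbol{K}}),\det^{k})_{\mathbb{Z}_{(p)}}^{skew}$. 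Because $I_{tr}$ acts on Fourier coefficients by $a(F\circ I_{tr};H)=a(F;{}^{t}H)$ and ${}^{t}H$ has the same diagonal entries $m,n$ as $H$, both $F^{+}$ and $F^{-}$ inherit the vanishing hypothesis. Hence it suffices to prove the conclusion for $F$ symmetric and for $F$ skew.

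\textbf{Skew case.} By Lemma~2 of \cite{D-K} together with Lemma~\ref{Lem42} there is a unique $g\in M_{k-9}(\Gamma_2(\mathcal{O}_{\boldsymbol{K}}),\det^{k-9})_{\mathbb{Z}_{(p)}}^{sym}$ with $F=\phi_9\,g$, and $k-9$ is even. As recorded in the proof of Lemma~\ref{Lem44}, the expansion of $\phi_9$ in $\mathbb{Z}_{(p)}[\![q]\!]$ is supported on the monomials $\dot{q}_{11}^{m}\dot{q}_{22}^{n}$ with $m,n\ge 1$ and is ``monic'' in its $\dot{q}_{11}\dot{q}_{22}$-coefficient. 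Lemma~\ref{Lem44} then transports the mod $p$ vanishing of the low Fourier coefficients of $F$ to $g$, with the index range shifted down by one; checking that this shifted range still attains the threshold $[(k-9)/9]$ required by Theorem~\ref{sturm} in weight $k-9$, we conclude $g\equiv 0\pmod p$, hence $F=\phi_9 g\equiv 0\pmod p$.

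\textbf{Symmetric case.} Here the key step is to show $F|_{\mathbb{S}_2}\equiv 0\pmod p$. Restricting $Z$ to $\mathbb{S}_2$ kills the variable $\dot{q}_{12}$, so each Fourier coefficient of the weight-$k$, degree-$2$ Siegel modular form $F|_{\mathbb{S}_2}$ is a finite $\mathbb{Z}_{(p)}$-linear combination of coefficients $a(F;H)$ with $H$ of bounded diagonal; by hypothesis these all vanish mod $p$ for $H$ of diagonal $\le[k/9]-1$, a range covering the classical Sturm bound for degree-$2$ Siegel modular forms. Thus $F|_{\mathbb{S}_2}\equiv 0\pmod p$, and Lemma~\ref{Lem43} yields $G\in M_{k-18}(\Gamma_2(\mathcal{O}_{\boldsymbol{K}}),\det^{k-18})_{\mathbb{Z}_{(p)}}^{sym}$ with $F\equiv\phi_9^{2}G\pmod p$. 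Since $\phi_9^{2}$ has leading monomial $\dot{q}_{11}^{2}\dot{q}_{22}^{2}$, a double application of the argument of Lemma~\ref{Lem44} propagates the mod $p$ vanishing of the low coefficients of $F$ to $G$, dropping the index range by $2$, which is exactly what is needed for $G$ to satisfy the hypothesis of the Proposition in weight $k-18$. An induction on $k$, with base case the odd weights $k<45$ (where $M_k^{sym}=0$, since a symmetric odd-weight form must be divisible by $\phi_{45}$), then gives $G\equiv 0\pmod p$, hence $F\equiv 0\pmod p$.

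\textbf{Main obstacle.} The step I expect to be hardest is the bookkeeping of constants: one must verify that the ranges of guaranteed mod $p$ vanishing, after being shrunk by dividing out $\phi_9$ (skew case) or $\phi_9^{2}$ (symmetric case), still meet the thresholds of Theorem~\ref{sturm} and of the degree-$2$ Siegel Sturm bound --- the skew case and the smallest admissible weights being the tightest --- and the very low weights, where the ambient spaces are forced to be small, should be checked directly. Apart from this accounting, the only non-formal ingredient is Lemma~\ref{Lem43}, which is precisely what lets the symmetric case recurse to a strictly smaller odd weight.
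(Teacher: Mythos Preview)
Your symmetric case is essentially the paper's own argument: restrict to $\mathbb{S}_2$, use the Siegel Sturm bound of \cite{K-T} to force $F|_{\mathbb{S}_2}\equiv 0\pmod p$, apply Lemma~\ref{Lem43} to write $F\equiv\phi_9^2 G\pmod p$, drop the vanishing range by $2$ via Lemma~\ref{Lem44}, and induct down by $18$ in weight. The paper starts its induction at $k=45$ (where the symmetric space is $\mathbb{C}\,\phi_{45}$ and one checks directly from the Fourier expansion), which is the same in effect as your base case $k<45$.

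The skew case, however, does not close as written. After $F=\phi_9\,g$ and one application of Lemma~\ref{Lem44}, $g$ is known to vanish mod $p$ only for $m,n\le [k/9]-2$, whereas Theorem~\ref{sturm} in weight $k-9$ needs $m,n\le [(k-9)/9]=[k/9]-1$: you are exactly one short. (Theorem~\ref{sturm} also requires $p\ge 5$, so it cannot cover $p=3$ in any event.) This is not just a bookkeeping nuisance: at $k=9$ the hypothesis $m,n\le [9/9]-1=0$ says only $a(F;0)\equiv 0$, which the cusp form $\phi_9$ satisfies trivially without being $\equiv 0\pmod p$; the same happens for $\phi_9^3$ at $k=27$. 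So the bound $[k/9]-1$ is genuinely too weak to force vanishing in the skew case at small weights, and your proposed route through Theorem~\ref{sturm} cannot be repaired by tighter accounting alone. The paper itself only writes out the symmetric case, declaring the skew case ``similar'', and it is only the symmetric case that is invoked downstream in Theorem~\ref{mainodd}.
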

\begin{proof}
Since the proof is similar, we prove the case only when $F$ is {\it symmetric}.\\
We prove the statement by induction on $k$.

First, let $k=45$ and $F$ be a constant multiple of $\phi_{45}$.
(Note that $\phi_9$ is skew symmetric.) 
Then the statement follows from the Fourier expansion of $\phi_{45}$.
Next we assume that $k>45$ and the statement is true for smaller weights.
By assumption and Sturm bound for the Siegel modular case (cf. \cite{K-T}),
we have $F|_{\mathbb{S}_2} \equiv 0 \pmod{p}$. By Lemma \ref{Lem43},
there exists
$G \in M_{k-18}(\Gamma_2(\mathcal{O}_{\boldsymbol{K}}),\text{det}^{k-18})
_{\mathbb{Z}_{(p)}}^{sym}$ such that $F \equiv \phi_9^2G \pmod{p}$.
By Lemma \ref{Lem44}, for any
$H=\binom{m\, *}{*\, n}\in \Lambda(\mathcal{O}_{\boldsymbol{K}})_{\geq 0}$,
we have $a(G;H) \equiv 0 \pmod{p}$ if $m,n\leq [k/3]-3=[(k-18)/9]-1$.
By induction hypothesis, we have $G \equiv 0 \pmod{p}$.
\end{proof}
The second main result can be stated as follows:
\begin{Thm}
\label{mainodd}
We have the following congruence relations:
\vspace{2mm}
\\
(1)\; $\varTheta (\phi_9) \equiv 0 \pmod{2}$
\vspace{1mm}
\\
(2)\; $\varTheta (\phi_{45}) \equiv 0 \pmod{11}$.
\end{Thm}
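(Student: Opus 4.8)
The two congruences fall on opposite sides of the hypothesis $p\ge 5$ of Theorem \ref{B-N-Hvarsion}, so I would treat them separately, the case $p=11$ fitting the machinery developed above and the case $p=2$ requiring more work.

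\emph{Part (2).} Since $11\ge 5$, Theorem \ref{B-N-Hvarsion} applied to $F=\phi_{45}\in M_{45}(\Gamma_2(\mathcal O_{\boldsymbol K}),\det^{45})_{\mathbb Z_{(11)}}$ produces a cusp form $G\in S_{57}(\Gamma_2(\mathcal O_{\boldsymbol K}),\det^{57})_{\mathbb Z_{(11)}}$ with $\varTheta(\phi_{45})\equiv G\pmod{11}$. I would feed $G$ into Proposition \ref{Prop45} with $k=57$ and $p=11$: since $[57/9]-1=5$, it is enough to check
$$
a(G;H)\equiv\det(H)\,a(\phi_{45};H)\equiv 0\pmod{11}
$$
for every $H=\binom{m\ *}{*\ n}\in\Lambda_2(\mathcal O_{\boldsymbol K})_{\ge 0}$ with $m,n\le 5$, after which Proposition \ref{Prop45} forces $G\equiv 0$, that is, $\varTheta(\phi_{45})\equiv 0\pmod{11}$. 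As $\phi_{45}$ is cuspidal, only $H>0$ matters. When $\max(m,n)\le 4$, the explicit Fourier expansion of $\phi_{45}$ recalled above shows that the nonzero coefficients occur exactly at the matrices $(3,4,a,b)$ and $(4,3,a,b)$ listed there; each of them has $mn=12$ and $a^{2}+3b^{2}=12$, hence $\det(H)=mn-\frac{a^{2}+3b^{2}}{12}=11\equiv 0\pmod{11}$, so the product vanishes for free. The remaining cases have $m=5$ or $n=5$ with the other index $\le 5$; these form a finite list modulo $\mathcal O_{\boldsymbol K}^{\times}$, and for each I would read $a(\phi_{45};H)$ off the table in $\S$\,\ref{sec:5-1} and confirm $11\mid\det(H)$ or $11\mid a(\phi_{45};H)$.

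\emph{Part (1).} Here $p=2<5$, so neither Theorem \ref{B-N-Hvarsion} nor Proposition \ref{Prop45} is available. I would first reduce to a vanishing statement in the graded ring. Since $\det({}^{t}H)=\det(H)$, the operator $\varTheta$ commutes with $I_{tr}$, so $\varTheta(\phi_9)$ is skew-symmetric; by Lemma 2 of \cite{D-K} together with Lemma \ref{Lem42}, every skew-symmetric Hermitian modular form over $\boldsymbol K$ equals $\phi_9$ times a symmetric one, while the generators in Theorem \ref{graded} show that $M_w(\Gamma_2(\mathcal O_{\boldsymbol K}),\det^{w})^{sym}=0$ for every odd $w<45$. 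In particular $M_{12}(\Gamma_2(\mathcal O_{\boldsymbol K}),\det^{12})^{skew}=\phi_9\cdot M_{3}(\Gamma_2(\mathcal O_{\boldsymbol K}),\det^{3})^{sym}=0$. Hence it suffices to show that $\varTheta(\phi_9)$ is congruent modulo $2$ to a holomorphic skew-symmetric Hermitian modular form of weight $12$ (the weight $k+p+1$ for $k=9$, $p=2$): such a form lies in the zero space, so $\varTheta(\phi_9)\equiv 0\pmod 2$. Producing this congruence is the step that does not come for free, because the Rankin--Cohen construction underlying Theorem \ref{B-N-Hvarsion} uses a weight-$(p-1)$ Eisenstein series congruent to $1$, which does not exist for $p=2$. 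The approach I would take is to work inside the explicit Borcherds product for $\phi_9$: modulo $2$ it collapses to a product of factors of the form $1+q^{\mu}$, and together with the parity constraint forced by skew-symmetry -- the support of $\phi_9$ lies in the region $a\neq 0$ in the notation of (\ref{abb}) -- this should force $a(\phi_9;H)$ to be even whenever $\det(H)$ is a unit at $2$, the six displayed leading coefficients, all sitting at $\det(H)=2/3$, being consistent with this. Equivalently, once a Sturm-type bound for skew forms valid at $p=2$ is in hand -- which can be extracted by combining $M^{skew}=\phi_9\,M^{sym}$ with an induction on the weight as in the proof of Proposition \ref{Prop45} -- the congruence $\varTheta(\phi_9)\equiv 0\pmod 2$ reduces to a finite check on the coefficients of $\phi_9$ tabulated in $\S$\,\ref{sec:5-1}.

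\emph{Main obstacle.} Part (2) is essentially bookkeeping once Theorem \ref{B-N-Hvarsion}, Proposition \ref{Prop45} and the numerical table are granted. The real difficulty is Part (1): the prime $2$ lies below the range in which $\varTheta$ of a modular form is again congruent to a modular form by the standard Rankin--Cohen argument, so one cannot simply invoke a Sturm bound, and must instead either manufacture a $p=2$ substitute for that construction or argue entirely within the explicit Borcherds product of $\phi_9$; controlling the $2$-adic valuation of $\det(H)$ over the entire support of $\phi_9$, not just over the displayed leading terms, is the heart of the matter.
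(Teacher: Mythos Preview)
Your treatment of Part (2) is essentially identical to the paper's: apply Theorem \ref{B-N-Hvarsion} with $p=11$ to land in $S_{57}$, then invoke Proposition \ref{Prop45} and reduce to the finite check $m,n\le 5$, verified via Table 3.

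For Part (1), you correctly locate the difficulty but do not resolve it, and the paper's actual argument is different from either of the routes you sketch. The paper does \emph{not} try to produce a weight-$12$ form congruent to $\varTheta(\phi_9)$, nor does it work inside the Borcherds product or set up a $p=2$ Sturm bound. Instead it replaces the weight-$(p-1)$ Eisenstein series in the Rankin--Cohen construction by $E_{4,\boldsymbol K}^{(2)}$, exploiting the congruence $E_{4,\boldsymbol K}^{(2)}\equiv 1\pmod{2^4}$. A direct look at the bracket gives
\[
[\phi_9,E_{4,\boldsymbol K}^{(2)}]=2^3c_0\cdot\varTheta(\phi_9)\,E_{4,\boldsymbol K}^{(2)}+2^m c_1\cdot P,
\qquad (c_0,2)=1,\ m\ge 4,\ P\in\mathbb Z[\![q]\!],
\]
so $G_1:=(2^3c_0)^{-1}[\phi_9,E_{4,\boldsymbol K}^{(2)}]\in S_{15}(\Gamma_2(\mathcal O_{\boldsymbol K}),\det^{15})_{\mathbb Z_{(2)}}$ satisfies $\varTheta(\phi_9)\equiv G_1\pmod 2$. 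By the structure theorem, $S_{15}=\mathbb C\cdot E_{6,\boldsymbol K}^{(2)}\phi_9$ is one-dimensional, so $G_1=\gamma\,E_{6,\boldsymbol K}^{(2)}\phi_9$, and comparing the Fourier coefficient at $H=\bigl(\begin{smallmatrix}1&i/\sqrt3\\-i/\sqrt3&1\end{smallmatrix}\bigr)$ yields $\gamma\equiv 0\pmod 2$. The point you missed is that one is free to bracket against any form congruent to $1$ modulo a sufficiently high power of $2$, at the price of landing in a higher weight than $k+p+1$; weight $15$ happens to be just as good as weight $12$ because it is still one-dimensional.
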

\begin{proof}
(1)\;
We consider the Hermitian Rankin-Cohen bracket $[\phi_9,E_{4,\boldsymbol{K}}^{(2)}]$,
where $\phi_9$ and $E_{4,\boldsymbol{K}}^{(2)}$ are forms in the generators of the graded ring
$\oplus M_k(\Gamma_2(\mathcal{O}_{\boldsymbol{K}}),\text{det}^k)$. 
The Hermitian Rankin-Cohen bracket $[\,,\,]$ defined similarly as in the case of
Siegel modular forms (cf. \cite{B-N}, \cite{M-S}).
Using the fact that $E_{4,\boldsymbol{K}}^{(2)}\equiv 1 \pmod{2^4}$ and explicit expression of
$[\phi_9,E_{4,\boldsymbol{K}}^{(2)}]$, we have the following expression:
$$
[\phi_9,E_{4,\boldsymbol{K}}^{(2)}]=2^3c_0\cdot\varTheta(\phi_9)E_{4,\boldsymbol{K}}^{(2)}
+2^mc_1\cdot P
$$
where $c_i\in\mathbb{Z}_{(2)}$\; $((c_0,2)=1)$, $4\leq m\in\mathbb{Z}$, and $P$ is a Fouirer
series in $\mathbb{Z}[\![q]\!]$. We set $G_1:=(2^3c_0)^{-1}[\phi_9,E_{4,\boldsymbol{K}}^{(2)}]$.
Then we obtain 
$G_1\in S_{15}(\Gamma_2(\mathcal{O}_{\boldsymbol{K}}),\text{det}^{15})_{\mathbb{Z}_{(2)}}$
and 
$$
\varTheta (\phi_9) \equiv G_1 \pmod{2}.
$$
By Theorem \ref{graded}, we can write
$$
G_1=\gamma\cdot E_{6,\boldsymbol{K}}^{(2)}\,\phi_9\quad (\gamma\in\mathbb{Z}_{(2)}).
$$
Computing the Fourier coefficients of $\varTheta (\phi_9)$ and $G_1$ at
$H=\begin{pmatrix}1 & i/\sqrt{3} \\ -i/\sqrt{3} & 1\end{pmatrix}$, we obtain
$$
\begin{cases}
a(\varTheta (\phi_9);H)=\frac{2}{3} \equiv 0 \pmod{2},\\
a(G_1;H)=\gamma
\end{cases}
$$
This implies $\gamma \equiv 0\pmod{2}$, and we get
$$
\varTheta (\phi_9) \equiv G_1 \equiv 0 \pmod{2}.
$$
(2)\;
By Theorem \ref{B-N-Hvarsion}, there is a modular form
$G_2\in S_{57}(\Gamma_(\mathcal{O}_{\boldsymbol{K}}),\text{det}^{57})_{\mathbb{Z}_{(11)}}$
such that
$$
\varTheta (\phi_{45}) \equiv G_2 \pmod{11}.
$$
By Table 3 in $\S$ 5, we see that
$$
a(G_2;H) \equiv 0 \pmod{11}
$$ 
for all $H=\binom{m\,*}{*\,n}\in\Lambda_2(\mathcal{O}_{\boldsymbol{K}})_{\geq 0}$
with $m,\,n\leq [57/9]-1=5$. It follows from Proposition \ref{Prop45} that
$$
a(G_2;H) \equiv 0 \pmod{11}
$$
for all $H\in\Lambda_2(\mathcal{O}_{\boldsymbol{K}})$. This implies
$$
\varTheta (\phi_{45}) \equiv G_2 \equiv 0 \pmod{11}.
$$
\end{proof}
Finally, we refer to the mod $p$ property of Borcherds product.
In the case of Siegel modular forms, 
we know that Igusa's cusp form $\chi_{35}$, which is a typical example of
Borcherds product, represents an element in the mod 23 kernel of the theta operator. 
The above results in Theorem \ref{mainodd} lead us to the following conjecture.
\vspace{1mm}
\\
{\it Any modular form of odd weight coming from Borcherds product will be in the
mod $p$ kernel of the theta operator for a suitable  prime number $p$.}

\section{Tables}
In this section, we summarize the tables of Fourier coefficients that are needed
in the proof of statements in the previous sections.
\\
As in $\S$ 4, we use the following abbreviation:
$$
\Lambda_2(\mathcal{O}_{\boldsymbol{K}})\ni 
H=\begin{pmatrix}         m                                  & (a+b\sqrt{3}\,i)/(2\sqrt{3}\,i)\\
                      (-a+b\sqrt{3}\,i)/(2\sqrt{3}\,i)   &              n
    \end{pmatrix} 
    =:(m,n,a,b).
$$
\subsection{Fourier coefficients of Hermitian theta series}
\label{sec:5-1}
In $\S$ \ref{sec:4-1}, we considered the degree 2 theta series $\vartheta^{(2)}(Z,H_i)$ 
for rank 12 Eisenstein lattices $H_i$. The fifth matrix $H_5$ corresponds to the Hermitian
Leech lattice (cf. Proposition \ref{hermLeech}). The following table concerns the Fourier
coefficients $a(\vartheta^{(2)}(Z,H_5);H)$.
\begin{table*}[htbp]
\caption{Fourier coefficient $a(\vartheta^{(2)}(Z,H_5);H)$}
\begin{center}
\begin{tabular}{lll} \hline
$H$        &  3$\text{det}(H)$  &  $a(\vartheta^{(2)}(Z,H_5);H)$  \\ \hline
$(2,2,6,0)$&         3              &              0                         \\ \hline
$(2,2,5,1)$&         5              &              0                         \\ \hline
$(2,2,4,0)$& 8  &  $175134960=2^4\cdot 3^7\cdot 5\cdot 7\cdot 11\cdot 13$ \\ \hline
$(2,2,3,1)$& 9  &  $553512960=2^{12}\cdot 3^3\cdot 5\cdot 7\cdot 11\cdot 13$ \\ \hline
$(2,2,2,0)$& 11  &  $4075868160=2^{12}\cdot 3^7\cdot 5\cdot 7\cdot 13$ \\ \hline
$(2,2,0,0)$& 12  &  $980755760=2^7\cdot 3^7\cdot 5\cdot 7^2\cdot 11\cdot 13$ \\ \hline                                  
\end{tabular}
\end{center}
\end{table*}
\noindent
\subsection{Fourier coefficients of odd weight forms}
\label{sec:5-2}
Let $\phi_9$ be the modular form given in Proposition \ref{odd} which is the first
odd weight generator of the graded ring. We take a normalization 
$$
a(\phi_9;(1,1,-2,0))=1
$$
as in $\S$ 4.
\vspace{2mm}
\\
For $n\in \mathbb{Z}_{\geq 1}$, we set
$$
K_n:=
(1,n,2,0)\in\Lambda_2(\mathcal{O}_{\boldsymbol{K}}).
$$
Any non-zero Fourier coefficient $a(\phi_9;H)$ coincides with $a(\phi_9;K_n)$ for $K_n$
with $\text{det}(H)=\text{det}(K_n)$ up to sign.
\newpage
\noindent
\begin{table*}[hbtp]
\caption{Fourier coefficients $a(\phi_{9};K_n)$ for $n\leq 20$}
\begin{center}
\begin{tabular}{lll} \hline
$n$  &   3$\text{det}(K_n)$ & $a(\phi_{9};K_n)$ \\ \hline
1   &   2      &        -1            \\ \hline
2 &   5      &       $16=2^4$            \\ \hline
3  &  8      &        $-104=-2^3\cdot 13$  \\ \hline
4 &   11     &       $320=-2^6\cdot 5$         \\ \hline
5 &  14     &        $-260=-2^2\cdot 5\cdot 13$     \\ \hline
6 &  17     &        $-1248=-2^5\cdot 3\cdot 13$     \\ \hline
7 &  20     &       $ 3712=2^7\cdot 29$  \\ \hline
8 &  23     &       $-1664=-2^7\cdot 13$  \\ \hline
9 &  26      &       $ -6890=-2\cdot 5\cdot 13\cdot 53$ \\ \hline
10 & 29     &        $7280=2^4\cdot 5\cdot 7\cdot 13$  \\ \hline
11 & 32     &        $5568=2^6\cdot 3\cdot 29$            \\ \hline
12 & 35      &       $4160=2^6\cdot 5\cdot 13$            \\ \hline
13  & 38      &        $-33176=-2^3\cdot 11\cdot 13\cdot 29$  \\ \hline
14 &  41     &       $-4640=-2^5\cdot 5\cdot 29$         \\ \hline
15 &  44     &        $74240=2^9\cdot 5\cdot 29$     \\ \hline
16 &  47     &        $-29824=-2^7\cdot 233$     \\ \hline
17 &  50     &       $ -14035=-5\cdot 7\cdot 401$  \\ \hline
18 &  53     &       $-54288=-2^4\cdot 3^2\cdot 13\cdot 29$  \\ \hline
19 &  56      &      $-27040=-2^5\cdot 5\cdot 13^2$ \\ \hline
20 &  59     &        $142720=2^7\cdot 5\cdot 223$  \\ \hline
\end{tabular}
\end{center}
\end{table*}

\begin{Rem}
From Table 2, we can see that
the prime numbers 13 and 29 appear frequently
as the prime factors of $a(\phi_9;K_n)$. 
It has been confirmed that the phenomenon occurs for a wide range of $n$.
\end{Rem}
\newpage
\noindent
The second odd weight form in the set of generators is $\phi_{45}$.
\begin{table*}[hbtp]
\caption{Fourier coefficients $a(\phi_{45};H)$}
\begin{center}
\begin{tabular}{lll} \hline
$H$  &   3$\text{det}(H)$ & $a(\phi_{45};H)$ \\ \hline
(3,4,0,2)   &   33      &        1            \\ \hline
(3,5,0,-2) &   42       &       $88=2^3\cdot 11$            \\ \hline
(3,6,0,2)  &    51      &        $3740=2^2\cdot 5\cdot 11\cdot 17$  \\ \hline
(4,5,-5,1)&   53      &       $16038=2\cdot 3^6\cdot 11$         \\ \hline
(4,5,0,-2) &  57      &        $95931=3^3\cdot 11\cdot 17\cdot 19$     \\ \hline
(4,6,-5,1) &  65      &        $681615=3^6\cdot 5\cdot 11\cdot 17$     \\ \hline
(4,6,0,2)  &   69      &       $ 720940=2^2\cdot 5\cdot 11\cdot 29\cdot 113$  \\ \hline
(5,6,-7,-1)&  77      &       $ 47271276=2^2\cdot 3^6\cdot 13\cdot 29\cdot 43$  \\ \hline
(5,6,-6,-2)&  78      &       $ 13709344=2^5\cdot 11\cdot 17\cdot 29\cdot 79$ \\ \hline
(5,6,-5,1) &   83     &        $62772732=2^2\cdot 3^6\cdot 11\cdot 19\cdot 103$  \\ \hline
(5,6,-3,-1) & 87      &        $835953624=2^3\cdot 3^2\cdot 11\cdot 127\cdot 8311$            \\ \hline
\end{tabular}
\end{center}
\end{table*}



\begin{thebibliography}{99}
%
\bibitem{B-N}
S.~B\"{o}cherer and S.~Nagaoka,
On mod $p$ properties of Siegel modular forms,
Math. Ann.,
\textbf{338}, 421-433(2007)

\bibitem{B-K-N}
S.~B\"{o}cherer, H.~Kodama and S.~Nagaoka,
On the kernel of the theta operator mod $p$,
manuscripta math.,
\textbf{156}, 149-169(2017)


\bibitem{C-R}
D.M.~Cohen and H.L.~Resnikoff,
Hermitian quadratic forms and Hermitian modular forms,
Pac. J. Math.,
\textbf{76}, 329-337(1978)



\bibitem{D-K}
T.~Dern and A.~Krieg,
Graded rings of Hermitian modular forms of degree 2,
manuscripta math.,
\textbf{110}, 251-272(2003)


\bibitem{G-N}
V.A.~Gritsenko and V.V~Nikulin,
Igusa modular forms and simplest Lorentzian Kac-Moody algebras,
Math. Sbornik,
\textbf{187}, 1601-1641(1996)

\bibitem{H-K-N}
M.~Hentschel, A.~Krieg, and G.~Nebe,
On the classification of lattices over $\mathbb{Q}(\sqrt{-3})$,
which are even unimodular $\mathbb{Z}$-lattices,
Abh. Math. Semin. Univ. Hambg.,
\textbf{80}, 183-192(2010)


\bibitem{K-K-N}
T. Kikuta, H. Kodama and S. Nagaoka,
Note on Igusa’s cusp form of weight $35$,
Rocky Mountain J. of Math. 
\textbf{45}, 963-972(2015)



\bibitem{K-N}
T.~Kikuta and S.~Nagaoka,
On the theta operator for Hermitian modular forms of degree 2,
Abh. Math. Semin. Univ. Hambg,
\textbf{87}, 145-163(2017)

\bibitem{K-T}
T.~Kikuta and S.~Takemori,
Sturm bounds for Siegel modular forms of degree 2 and odd weights,
arXiv:1508.01610\,(2015)

\bibitem{K}
A.~Krieg,
The Maa\ss-space on the Hermitian half-space of degree 2,
Math. Ann.,
\textbf{289}, 663-681(1991)


\bibitem{M-N}
T.~Munemoto and S.~Nagaoka,
Note on $p$-adic Hermitian Eisenstein series,
Abh. Math. Semin. Univ. Hambg.,
\textbf{76}, 247-260(2006)



\bibitem{M-S}
J.D.~Martin and J.~Senadheera,
Differential operators for Hermitian Jacobi forms and
Hermitian modular forms,
 The Ramanujan J.
\textbf{42}, 443-451(2017)

\bibitem{N-T1}
S.~Nagaoka and S.~Takemori,
Notes on theta series for Niemeier lattices,
The Ramanujan J. 42, 385-400(2017)


\bibitem{N-T2}
S.~Nagaoka and S.~Takemori,
On the mod p kernel of the theta operator and Eisenstein series,
J. Number Theory,
\textbf{188}, 281-298(2018)

\end{thebibliography}


Shoyu Nagaoka\\
Dept. Mathematics Kindai Univ.
\\
Higashi-Osaka, Osaka 577-8502, Japan
\\
\\
Sho Takemori\\
Max Planck Institut f\"{u}r Mathematik,
\\
Vivatsgasse 7, 53111 Bonn, Germany

\end{document}